\newtheorem{Theorem}{Theorem}[section]
\newtheorem{Lemma}[Theorem]{Lemma}
\newtheorem{Corollary}[Theorem]{Corollary}
\newtheorem{Proposition}[Theorem]{Proposition}
\newtheorem{Definition}[Theorem]{Definition}
\newtheorem{Question}[Theorem]{Question}
\def\qed{\ifhmode\textqed\fi
	\ifmmode\ifinner\quad\qedsymbol\else\dispqed\fi\fi}
\def\textqed{\unskip\nobreak\penalty50
	\hskip2em\hbox{}\nobreak\hfill\qedsymbol
	\parfillskip=0pt \finalhyphendemerits=0}
\def\dispqed{\rlap{\qquad\qedsymbol}}
\def\Ker{\textup{Ker}}
\def\ZZ{\mathbb{Z}}
\def\m{\mathfrak{m}}
\def\n{\mathfrak{n}}
\def\Tor{\textup{Tor}}
\def\supp{\textup{supp}}
\def\reg{\textup{reg}}
\def\rank{\textup{rank}}
\def\sgn{\textup{sgn}}
\begin{document}
	
	\title{On the gradient of a monomial ideal}
	\author{Antonino Ficarra}
	
	\address{Antonino Ficarra, BCAM -- Basque Center for Applied Mathematics, Mazarredo 14, 48009 Bilbao, Basque Country -- Spain, Ikerbasque, Basque Foundation for Science, Plaza Euskadi 5, 48009 Bilbao, Basque Country -- Spain}
	\email{aficarra@bcamath.org,\,\,\,\,\,\,\,\,\,\,\,\,\,antficarra@unime.it}
	
	\subjclass[2020]{Primary 13D02, 13C05, 13A02; Secondary 05E40}
	\keywords{Differential ideals, Castelnuovo-Mumford regularity, Linear resolution, Linear quotients}
	
	\begin{abstract}
		Let $K$ be a field of characteristic zero, let $I \subset S = K[x_1,\dots,x_n]$ be a homogeneous ideal, and let $\partial(I)$ be its gradient ideal. We study the relationship between $\mathrm{reg}\,I$ and $\mathrm{reg}\,\partial(I)$. While earlier work by Bus\'e, Dimca, Schenck, and Sticlaru showed these regularities are generally incomparable for hypersurface ideals, we prove they remain incomparable even for monomial ideals with linear resolution, answering a question of J. Herzog. In fact, for any integers $a \in \mathbb{Z}$ and $b \ge - 1$, we construct monomial ideals $I$ and $J$ such that $\mathrm{reg}\,I - \mathrm{reg}\,\partial(I) = a$, $\mathrm{reg}\,\partial(J) - \mathrm{reg}\,J = b$ and $J$ has linear resolution. We introduce monomial ideals with \emph{differential linear resolution} as those monomial ideals whose all iterated gradient ideals have linear resolution. We prove that polymatroidal ideals, equigenerated (strongly) stable ideals, powers of edge ideals with linear resolution, complementary edge ideals with linear resolution, and certain equigenerated squarefree monomial ideals with many generators satisfy this property.
	\end{abstract}
	
	\maketitle
	\section{Introduction}
	Let $f\in\mathbb{C}[x_1,\dots,x_n]$ be a homogeneous polynomial of degree $d$. The \textit{Jacobian ideal} of $f$ is the ideal $\textup{Jac}(f)=(\partial_{x_1}f,\dots,\partial_{x_n}f)$ generated by the partial derivatives of $f$. The ring $M(f)=\mathbb{C}[x_1,\dots,x_n]/\textup{Jac}(f)$ is often called the \textit{Milnor algebra} of $f$. It encodes the singular locus of the hypersurface $V(f)\subset\mathbb{P}^{n-1}$. There is a strong interplay between the algebraic properties of $M(f)$ and the geometry of $V(f)$. For instance, it is well-known that $f$ is a free divisor if and only if $\textup{Jac}(f)$ is a perfect ideal of codimension two. When $V(f)$ is smooth, then $M(f)$ is an Artinian complete intersection and plays a central role in the Hodge algebra of $V(f)$.
	
	In \cite{BDSS}, Bus\'e, Dimca, Schenck, and Sticlaru compared the Castelnuovo-Mumford regularities $\reg(f)=d$ and $\reg\,\textup{Jac}(f)$, where $f\in\mathbb{C}[x_1,\dots,x_n]$ is a homogeneous polynomial of degree $d$. Despite the naive expectation that $\reg\,\textup{Jac}(f)\le d-1$, these authors proved that the regularity of $\textup{Jac}(f)$ is bounded above by $n(d-2)$ under certain assumptions and can even grow quadratically with $d$.
	
	More generally, for a homogeneous ideal $I\subset S=K[x_1,\dots,x_n]$, where $K$ is a field, we can define the \textit{gradient ideal} of $I$ as the ideal $\partial(I)$ generated by all the partial derivatives of any set of minimal generators of $I$. In characteristic zero, the definition of $\partial(I)$ does not depend on the particular generating set of $I$ chosen. Gradient ideals appear in the work of Herzog and Huneke on Golodness of various kind of powers of homogeneous ideals \cite{HHun}, as well as in the work of Ahangari Maleki \cite{Maleki}.
	
	We would like to compare the regularities $\reg\,I$ and $\reg\,\partial(I)$ for a homogeneous ideal $I\subset S$. Of course, due to the work of \cite{BDSS}, in general we cannot expect that $\reg\,\partial(I)\le\reg\,I$. On the other hand, if $I$ is a monomial ideal, some initial evidence lead to believe that such a nice comparison could hold. Let $\mathcal{G}(I)$ be the minimal monomial generating set of a monomial ideal $I\subset S$. In this situation, if the characteristic of $K$ is zero or is strictly larger than any exponent appearing in the minimal monomial generators of $I$, then $\partial(I)=\partial^*(I)$ where
	$$
	\partial^*(I)\ =\ (u/x_i:\ u\in\mathcal{G}(I),\,i\in\supp(u)),$$
	and $\supp(u)=\{i:x_i\ \textup{divides}\ u\}$ is the \textit{support} of $u$. Hereafter, we always assume that the equality $\partial(I)=\partial^*(I)$ holds for any monomial ideal $I$ we will consider.
	
	It is easily seen that if $I$ is principal monomial ideal generated in degree $d$, then $\partial(I)$ is an ideal of Veronese type and so $\reg\,\partial(I)=\reg\,I-1=d-1<\reg\,I$.
	
	It is obvious that $\partial^*(I)=\sum_{i=1}^n(I:x_i)$. It is well-known that $\reg(I:x_i)\le\reg\,I$ if $I\subset S$ is a monomial ideal \cite[Lemma 4.2]{Fak}.
	
	Hence, in view of these facts one may naively expect that $\reg\,\partial(I)\le\reg\,I$ for any monomial ideal $I\subset S$. In Theorem \ref{Thm:reg-a} we disprove this expectation and we show that the difference $\reg\,I-\reg\,\partial(I)$ can be any relative integer in $\ZZ$.
	
	On the other hand, it is quite difficult to find monomial ideals $I\subset S$ with linear resolution such that $\reg\,\partial(I)>\reg\,I$. During a mathematical conversation, J\"urgen Herzog posed the following question to the author of this paper:
	\begin{Question}\label{Quest:Her}
		Let $I\subset S$ be a monomial ideal with $d$-linear resolution. Is it true that $\reg\,\partial(I)\le d$?
	\end{Question}
	
	There are many classes $\mathcal{C}$ of monomial ideals with linear resolution which are closed under the gradient operation. That is, if $I\in\mathcal{C}$ then $\partial(I)\in\mathcal{C}$. For such classes, the previous question holds trivially because $\reg\,\partial(I)=\reg\,I-1$. Among these families we have the class of polymatroidal ideals (Theorem \ref{Thm:partial-polym}) and the class of equigenerated (strongly) stable monomial ideals (Proposition \ref{Prop:stable}).
	
	For a homogeneous ideal $I\subset S$, we put $\partial^0(I)=I$, $\partial^1(I)=\partial(I)$ and for any integer $\ell>1$ we put $\partial^\ell(I)=\partial(\partial^{\ell-1}(I))$ recursively.
	
	We say that a homogeneous ideal $I\subset S$ has \textit{differential linear resolution} if $\partial^\ell(I)$ has linear resolution for all $0\le\ell\le\alpha(I)$. The classes mentioned before enjoy this property, as well as all squarefree monomial ideals $I\subset S$ having $d$-linear resolution with $d\in\{0,1,2,n-2,n-1,n\}$ (Theorem \ref{Thm:sqfree-d}), equigenerated squarefree monomial ideals with many generators satisfying a certain numerical condition (Theorem \ref{Thm:Kruskal}) and the powers of edge ideals with linear resolution (Theorem \ref{Thm:edgeideal}).
	
	Despite all the positive evidence supporting it, Question \ref{Quest:Her} has in fact a negative answer. In Theorem \ref{Thm:linRes-Partial}, we prove that for an arbitrary monomial ideal $I\subset S$ with linear resolution the difference $\reg\,\partial(I)-\reg\,I$ can be any integer $\ge-1$.
	
	Finally, we would like to mention that we expect that equigenerated monomial ideals $I\subset S$ with many generators, namely such that $|\mathcal{G}(I)|\ge\binom{n}{\alpha(I)}-2\alpha(I)+1$ where $\alpha(I)$ is the generating degree of $I$, as well as the powers of any complementary edge ideal with linear resolution, have differential linear resolution.
	
	\section{Comparison of $\reg\,I$ and $\reg\,\partial(I)$}\label{sec1}
	
	In this first section, we compare the Castelnuovo-Mumford regularities of a monomial ideal and its gradient.
	
	Let $S=K[x_1,\dots,x_n]$ be the standard graded polynomial ring over a field $K$, and let $I\subset S$ be a monomial ideal. For an integer $n\ge1$, we put $[n]=\{1,2,\dots,n\}$. We always tacitly assume that $\text{char}(K)=0$ or that $\text{char}(K)$ is big enough so that
	$$
	\partial(I)\ =\ \partial^*(I)\ =\ \sum_{i\in[n]}(I:x_i).
	$$
	
	We denote by $\mathcal{G}(I)$ the minimal monomial generating set of $I$. Let $\m=(x_1,\dots,x_n)$ be the graded maximal ideal of $S$. For a homogeneous ideal $I\subset S$ we put
	$$
	\alpha(I)\ =\ \min\{d:(I/\m I)_d\ne0\},\quad\quad\,\,\,\omega(I)\ =\ \max\{d:(I/\m I)_d\ne0\}.
	$$
	
	If, in addition, $I\subset S$ is a monomial ideal, then $\alpha(I)=\min\{\deg(u):u\in\mathcal{G}(I)\}$ and $\omega(I)=\max\{\deg(u):u\in\mathcal{G}(I)\}$.
	
	Let
	$$
	\reg\,I\ =\ \max\{j-i:\ \beta_{i,j}(I)=\dim_K\Tor^S_i(K,I)_j\ne0\}
	$$
	be the \textit{Castelnuovo-Mumford regularity} of $I$. We always have $\reg\,I\ge\omega(I)\ge\alpha(I)$. If $\reg\,I=\omega(I)=\alpha(I)=d$, we say that $I$ has a \textit{$d$-linear resolution}, or simply that $I$ has linear resolution, if we do not want to specify the generating degree $d$ of $I$.
	
	Given ${\bf a}\in\ZZ_{\ge0}^n$, we put ${\bf x^a}=\prod_{i\in[n]}x_i^{a_i}$ and $|{\bf a}|=\deg({\bf x^a})=a_1+\dots+a_n$. Let $d$ be an integer with $d\le|{\bf a}|$. The ideal of \textit{Veronese type} $({\bf a},d)$ is the ideal defined as
	$$
	I_{n,{\bf a},d}\ =\ ({\bf x^b}:\ |{\bf b}|=d,\ {\bf b}\le{\bf a}).
	$$
	
	Note that $\partial(({\bf x^a}))=I_{n,{\bf a},|{\bf a}|-1}$ and this ideal has linear resolution. Using this elementary observation we obtain
	\begin{Proposition}
		Let $I\subset S$ be a monomial ideal. Then
		$$
		\alpha(I)-1\ \le\ \reg\,\partial(I)\ \le\ \sum_{u\in\mathcal{G}(I)}\deg(u)-2|\mathcal{G}(I)|+1,
		$$
		and equality holds for the upper bound if $I$ is a complete intersection.
	\end{Proposition}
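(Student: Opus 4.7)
The plan is to address the lower bound, the upper bound, and the complete intersection equality as three separate steps.

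For the lower bound, choose any $u\in\mathcal{G}(I)$ with $\deg u=\alpha(I)$ and $i\in\supp u$. Then $u/x_i\in\partial(I)$ has degree $\alpha(I)-1$, while every element of the canonical generating set $\{v/x_j:v\in\mathcal{G}(I),\,j\in\supp v\}$ of $\partial(I)$ has degree at least $\alpha(I)-1$. Hence $\alpha(\partial(I))=\alpha(I)-1$, and the standard bound $\reg J\ge\alpha(J)$ for any nonzero graded ideal $J$ gives $\reg\partial(I)\ge\alpha(I)-1$.

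For the upper bound, list $\mathcal{G}(I)=\{u_1,\dots,u_r\}$ with $d_j=\deg u_j$ and consider the (non-minimal) generating set $G=\{u_j/x_i:j\in[r],\,i\in\supp u_j\}$ of $\partial(I)$. The Taylor complex on $G$ is a free resolution of $S/\partial(I)$, so
$$\reg\partial(I)\,\le\,\max_{\emptyset\ne G'\subseteq G}\bigl(\deg\lcm(G')-|G'|+1\bigr).$$
For each $G'$, let $I'\subseteq[r]$ index the $u_j$'s represented in $G'$, and partition $I'=I'_1\sqcup I'_2$ according to whether $G'$ contains exactly one or at least two derivatives of $u_j$. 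Using the identity $\lcm_{i\in S}(u_j/x_i)=u_j$ valid for any $S\subseteq\supp u_j$ of size $\ge 2$, the lcm of the members of $G'$ coming from $u_j$ has degree $d_j-1$ if $j\in I'_1$ and $d_j$ if $j\in I'_2$, which gives $\deg\lcm(G')\le\sum_{j\in I'}d_j-|I'_1|$; combined with $|G'|\ge|I'_1|+2|I'_2|$, this chains into
$$\deg\lcm(G')-|G'|+1\,\le\,\sum_{j\in I'}d_j-2|I'|+1\,\le\,\sum_{j=1}^r d_j-2r+1,$$
the final step using $d_j\ge 2$ for $j\notin I'$ (the case of a linear generator forces $\partial(I)=S$ and is checked directly).

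For the equality assertion, suppose $I$ is a complete intersection, so the $u_j$ have pairwise disjoint supports. Then the ideals $\partial((u_j))$ involve pairwise disjoint sets of variables. Decomposing $S$ as a tensor product of the subrings $K[\supp u_j]$ and the remaining variables, the Künneth formula yields $S/\partial(I)\cong\bigotimes_{j=1}^r K[\supp u_j]/\partial((u_j))$, tensored with a polynomial ring in any remaining variables. Since each $\partial((u_j))$ has $(d_j-1)$-linear resolution, $\reg K[\supp u_j]/\partial((u_j))=d_j-2$, and summing the contributions gives $\reg S/\partial(I)=\sum_{j=1}^r(d_j-2)=\sum d_j-2r$. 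Hence $\reg\partial(I)=\sum d_j-2r+1$, matching the upper bound. The main obstacle is the combinatorial Taylor bookkeeping in the second step, especially the inequality $\deg\lcm(G')-|G'|+1\le\sum_{j\in I'}d_j-2|I'|+1$ via the $I'_1/I'_2$ split; once it is in hand, the remainder reduces to standard regularity arithmetic and Künneth additivity.
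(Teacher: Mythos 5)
Your argument is correct and lands on the same bound, but your route to the upper bound is genuinely different from the paper's. The paper decomposes $\partial(I)=\sum_{j}\partial((u_j))$ into Veronese-type ideals, each with $(d_j-1)$-linear resolution, and then invokes the subadditivity bound $\reg(I_1+\dots+I_m)\le\sum_j\reg I_j-(m-1)$ coming from Herzog's generalized Taylor complex \cite{HTaylor}; the complete intersection case is then immediate because the summands live in pairwise disjoint sets of variables. You instead run the classical Taylor complex on the full non-minimal generating set of partial derivatives and do the lcm/cardinality bookkeeping by hand via the $I'_1/I'_2$ split. This is more elementary and self-contained (no black-box subadditivity, and the upper bound does not even require knowing that Veronese-type ideals have linear resolution), at the cost of the combinatorial argument; your K\"unneth computation is an explicit version of the paper's one-line remark, and you also spell out the lower bound, which the paper leaves implicit. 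Two small points to tighten. First, distinct pairs $(j,i)$ can yield the same monomial $u_j/x_i$ (e.g.\ $I=(x_1x_2,x_1x_3)$ produces $x_1$ twice), so to make $|G'|\ge|I'_1|+2|I'_2|$ literally true you should either take the Taylor complex on the multiset of derivatives or assign each element of $G'$ to a single index $j$ before defining $I'_1$ and $I'_2$. Second, in the degenerate case of a linear generator one has $\partial(I)=S$ while the stated upper bound can be negative (e.g.\ $I=(x_1,x_2)$ gives $\sum\deg(u)-2|\mathcal{G}(I)|+1=-1$), so ``checked directly'' is optimistic; but this degeneracy afflicts the paper's proof equally and is best excluded by convention rather than counted against you.
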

	\begin{proof}
		Let $\mathcal{G}(I)=\{u_1,\dots,u_m\}$ with $u_i={\bf x}^{{\bf a}_i}$ for each $i$. Then $\partial(I)=\sum_{i=1}^mI_{n,{\bf a}_i,|{\bf a}_i|-1}$. Using \cite[Corollary 3.2]{HTaylor} (see also \cite{KM}) we obtain that
		\begin{align*}
			\reg\,\partial(I)\ &\le\ \sum_{i=1}^m\reg(I_{n,{\bf a}_i,|{\bf a}_i|-1})-(m-1)\ =\ \sum_{i=1}^m(|{\bf a}_i|-1)-m+1\\
			&=\ \sum_{i=1}^m\deg(u_i)-2m+1.
		\end{align*}
		If $I$ is a complete intersection, then $I_{n,{\bf a}_i,|{\bf a}_i|-1}$ are monomial ideals generated in pairwise disjoint sets of variables, and the above inequalities become equalities.
	\end{proof}
	
	Recall that a monomial ideal $I\subset S$ has \textit{linear quotients} if there exists an order $u_1,\dots,u_m$ of $\mathcal{G}(I)$ such that the colon ideals $(u_1,\dots,u_{i-1}):(u_i)$ are generated by variables for all $i=2,\dots,m$. By \cite[Corollary 8.2.14]{HHBook}, if $I\subset S$ has linear quotients then $\reg\,I=\omega(I)$. Hence if $I$ is generated in one degree and it has linear quotients, then it has linear resolution.
	
	In the following result we show that for a monomial ideal $I\subset S$ the difference $\reg\,I-\reg\,\partial(I)$ can be any relative integer.
	
	\begin{Theorem}\label{Thm:reg-a}
		For any relative integer $a\in\ZZ$, there exist an integer $n>0$ and a monomial ideal $I\subset S=K[x_1,\dots,x_n]$ such that
		$$
		\reg\,I-\reg\,\partial(I)\ =\ a.
		$$
	\end{Theorem}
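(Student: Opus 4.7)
The plan splits into two quite different cases according to the sign of $a$.

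For $a\ge 0$, I would use complete intersections. In $S=K[x_1,\dots,x_a]$, fix any $d\ge 2$ and take $I=(x_1^d,\dots,x_a^d)$ (with the convention $I=S$ when $a=0$). Since the generators form a regular sequence, the Koszul complex gives the minimal free resolution of $I$, yielding $\reg I=a(d-1)+1$. The gradient $\partial I=(x_1^{d-1},\dots,x_a^{d-1})$ is again a complete intersection (all generators have positive degree because $d\ge 2$), so by the same Koszul argument $\reg\partial I=a(d-2)+1$. Subtracting gives $\reg I-\reg\partial I=a$.

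For $a\le -1$, set $b=-a\ge 1$; the task is to exhibit a monomial ideal $I$ whose gradient has strictly larger regularity than $I$, refuting the naive expectation $\reg\partial I\le\reg I$. The plan is to construct a family $\{I_b\}_{b\ge 1}$ for which $\reg I_b$ can be pinned down from above (via linear quotients, or via a short exact sequence obtained by decomposing $I_b$ into pieces whose regularities are known), while $\partial I_b$ is shown to carry a minimal syzygy of sufficiently high internal degree, forcing $\reg\partial I_b\ge\reg I_b+b$. The matching upper bound on $\reg\partial I_b$ should follow from \cite[Corollary~3.2]{HTaylor} applied to the presentation $\partial I_b=\sum_{i\in[n]}(I_b:x_i)$.

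A natural route is first to pin down a single base example $I_1$ with $\reg\partial I_1-\reg I_1=1$, and then amplify the gap by combining $I_1$ with disjoint-variable copies of itself, or by tensoring with auxiliary ideals in fresh variable sets. In the disjoint-variables setting, $\partial(I'+I'')=\partial I'+\partial I''$ and regularities of sums of ideals in disjoint variables add (up to a controlled correction), so stacking $b$ suitable copies is expected to inflate the gap to exactly $b$.

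The main obstacle is the base case. Gradients of monomial ideals tend to collapse: many of the potential new generators $u/x_i$ are absorbed by one another via divisibility, and the resulting $\partial I$ usually has \emph{smaller} regularity than $I$. To defeat this, $I_1$ must be chosen non-equigenerated, so that $\partial I_1$ combines generators of several distinct degrees and picks up an essential non-linear syzygy that cannot be reduced in its minimal resolution. Producing such an $I_1$ explicitly and verifying both $\reg I_1$ and $\reg\partial I_1$ to the required precision is where the real work of the theorem lies.
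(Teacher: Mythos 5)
Your argument for $a\ge 1$ is correct and complete, and in fact cleaner than the paper's (which uses a non-equigenerated ideal in $K[x,y]$ and cites an external lemma to compute its regularity): $I=(x_1^d,\dots,x_a^d)$ and $\partial(I)=(x_1^{d-1},\dots,x_a^{d-1})$ are both complete intersections, so $\reg I-\reg\partial(I)=a(d-1)+1-\bigl(a(d-2)+1\bigr)=a$. However, the remaining cases contain genuine gaps. For $a=0$ your convention $I=S$ fails: then $n=0$ (the theorem requires $n>0$), and $\partial\bigl((1)\bigr)=(0)$ since the only minimal generator has empty support, so the difference of regularities is not $0$. The case $a=0$ is genuinely nontrivial; the paper handles it with the explicit ideal $(x_1x_2x_3,x_2x_3x_4,x_3x_4x_5,x_4x_5x_6)$, whose gradient's regularity requires a separate computation. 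One clean repair within your framework: since for nonzero proper ideals in disjoint variable sets one has $\reg(I'+I'')=\reg I'+\reg I''-1$ and $\partial(I'+I'')=\partial(I')+\partial(I'')$, the quantity $\reg I-\reg\partial(I)$ is additive under disjoint sums, so $a=0$ follows from one example with gap $+1$ and one with gap $-1$.

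That brings us to the main gap: for $a\le -1$ you describe a strategy (find a base example $I_1$ with $\reg\partial(I_1)-\reg I_1=1$, then amplify by disjoint stacking) but you do not produce $I_1$, and you explicitly acknowledge that this is ``where the real work of the theorem lies.'' That admission is accurate; without the base example the negative half of the theorem is unproved. Your amplification step is sound by the additivity just mentioned, but the existence of a single monomial ideal with $\reg\partial(I)>\reg I$ is precisely the content being claimed. For the record, the paper's construction is $I=(x_1x_2,\,x_1x_3^b,\,x_2x_4^b)$ with $b=2-a$: the colon ideals $(x_1x_2):(x_1x_3^b)=(x_2)$ and $(x_1x_2,x_1x_3^b):(x_2x_4^b)=(x_1)$ show $I$ has linear quotients, so $\reg I=\omega(I)=b+1$, while $\partial(I)=(x_1,x_2,x_3^b,x_4^b)$ is a complete intersection with $\reg\partial(I)=2b-1$; this single family already realizes every gap $a=2-b\le -1$ with no stacking needed. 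Note that your intuition that $I_1$ must be non-equigenerated and that the gradient should pick up generators of mixed degrees is exactly what this example exploits (the gradient becomes a complete intersection of much larger regularity), but an intuition is not a substitute for the example.
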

	\begin{proof}
		Let $a\le-1$. Set $b=2-a\ge1$, and let
		$$
		I\ =\ (x_1x_2,\,x_1x_3^b,\,x_2x_4^b)\subset S=K[x_1,x_2,x_3,x_4].
		$$
		Notice that $(x_1x_2):(x_1x_3^b)=(x_2)$ and $(x_1x_2,x_1x_3^b):(x_2x_4^b)=(x_1)$. Hence, $I$ has linear quotients and $\reg\,I=\omega(I)=b+1$. Notice that $\partial(I)=(x_1,x_2,x_3^b,x_4^b)$ is a complete intersection. So $\reg\,\partial(I)=2b-1$ and $\reg\,I-\reg\,\partial(I)=2-b=a\le-1$.
		
		For $a=0$, let
		$$
		I\ =\ (x_1x_2x_3,\,x_2x_3x_4,\,x_3x_4x_5,\,x_4x_5x_6)\subset S=K[x_1,\dots,x_6].
		$$
		
		Then $I$ has linear quotients for the generators in the given order. Hence $\reg\,I=3$. We have $\reg\,\partial(I)=3$ as well, as one can check with \textit{Macaulay2} \cite{GDS}. This is also proved in the next Theorem \ref{Thm:linRes-Partial}. Hence $\reg\,I-\reg\,\partial(I)=a=0$.
		
		Now let $a\ge1$. Put $x=x_1$ and $y=x_2$. Let $b$ and $c$ integers $\ge2$ such that $b-c+1=a$. Then $b\ge c\ge2$. We put
		$$
		I\ =\ (x^c,\,x^{c-1}y,\,\dots,\,x^{c-i}y^{i},\,\dots,\,xy^{c-1},\,y^b)\subset S=K[x,y].
		$$
		
		Using \cite[Lemma 3.3]{F25} we have
		\begin{align*}
			\reg\,I\ &=\ \max\Bigg\{\substack{\displaystyle\max\{\deg(x^{c-i}y^i),\,\deg(y^b):0\le i\le c-1\}\\[3pt] \displaystyle\max\{\deg(x^{c-j}y^{j+1})-1,\,\deg(xy^{b})-1:0\le j\le c-2\}}\Bigg\}\\[3pt]
			&=\ \max\{\max\{c,\,b\},\,\max\{c+1-1,\,b+1-1\}\}\ =\ b.
		\end{align*}
		
		Since $b\ge c$, we have $\partial(I)=(x,y)^{c-1}$ and so $\reg\,\partial(I)=c-1$. Finally, we have $\reg\,I-\reg\,\partial(I)=b-(c-1)=a$, as desired.
	\end{proof}
	
	For a non-empty subset $F$ of $[n]$ we put ${\bf x}_F=\prod_{i\in F}x_i$ and ${\bf x}_\emptyset=1$.\smallskip
	
	Let $I\subset S$ be a monomial ideal with linear resolution, then
	$$
	\reg\,\partial(I)\ \ge\ \alpha(\partial(I))\ =\ \alpha(I)-1\ =\ \reg\,I-1
	$$
	and so the inequality $\reg\,\partial(I)-\reg\,I\ge-1$ holds always. On the other hand, the inequality $\reg\,\partial(I)\le\reg\,I=\alpha(I)$ needs not to hold even in such a situation. In fact, for any integer $a\ge-1$, we can find a monomial ideal $I\subset S$ with linear resolution such that $\reg\,\partial(I)-\reg\,I=a$, as we show next.
	\begin{Theorem}\label{Thm:linRes-Partial}
		For any integer $a\ge-1$, there exist $n>0$ and a monomial ideal $I\subset S=K[x_1,\dots,x_n]$ with linear resolution such that $$\reg\,\partial(I)-\reg\,I\ =\  a.$$
	\end{Theorem}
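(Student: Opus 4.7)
The plan is to split into cases according to $a$, producing an explicit family of ideals in each range.

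For $a = -1$, take $I = (x_1, \ldots, x_n)^d$ with any $d \ge 2$: this is polymatroidal, hence has a $d$-linear resolution, and $\partial(I) = (x_1,\ldots,x_n)^{d-1}$ gives $\reg\partial(I) - \reg I = -1$. For $a = 0$, the squarefree ideal $I = (x_1x_2x_3, x_2x_3x_4, x_3x_4x_5, x_4x_5x_6) \subset K[x_1,\ldots,x_6]$ from the proof of Theorem \ref{Thm:reg-a} is the natural witness: it was already shown there to have linear quotients in the given order (hence a $3$-linear resolution, since it is equigenerated in degree $3$), and a direct calculation (the one alluded to by the \emph{Macaulay2} reference in that proof) yields $\reg\partial(I) = 3$, so the difference is zero.

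For $a \ge 1$, which is the substantive case, the goal is to produce a parametric family $\{I_a\}$ of squarefree monomial ideals with linear resolution whose gradients have regularity exactly $\reg I_a + a$. The guiding intuition is that among squarefree monomial ideals generated in an \emph{intermediate} degree $d$ (neither $\le 2$ nor $\ge n-2$, where the classes listed in the introduction already force differential linear resolution), the gradient can acquire non-linear syzygies. Two plausible strategies for $I_a$ are available: one may ``stretch'' the $a=0$ path-like example by lengthening the sequence of overlapping $3$-element supports until the gradient accumulates an extra unit of regularity for each additional block; or one may take products (in disjoint variable sets) of several copies of the $a=0$ example and exploit the Leibniz identity $\partial(IJ) = J\,\partial(I) + I\,\partial(J)$, combined with the fact that products of linearly resolved ideals in disjoint variable sets again have linear resolution (their Koszul complexes tensor). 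Once $I_a$ is fixed, the steps are: first, verify linear quotients of $I_a$ in a natural order; second, present $\partial(I_a) = \sum_{i=1}^n (I_a : x_i)$ by its minimal monomial generators; third, pin down $\reg\partial(I_a)$ exactly.

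The third step is the main obstacle. The upper bound $\reg\partial(I_a) \le \reg I_a + a$ is typically within reach via \cite[Corollary 3.2]{HTaylor} (as used in the first proposition of this section) or by an inductive short exact sequence relating $\partial(I_a)$ to $\partial(I_{a-1})$. The lower bound is the hard direction: one must exhibit an explicit non-linear Betti number in $\partial(I_a)$, most naturally by Hochster's formula applied to the Alexander dual complex of $\partial(I_a)$, and control its position as $a$ grows. Balancing this increasing regularity defect of the gradient against the constraints that $I_a$ itself remain squarefree, equigenerated and linearly resolved is the delicate part of the construction.
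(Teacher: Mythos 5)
Your cases $a=-1$ and $a=0$ are fine (for $a=-1$ the paper uses any ideal with $2$-linear resolution, but $\m^d$ works just as well). The genuine gap is the case $a\ge 1$, which you yourself call ``the substantive case'': you never fix an ideal $I_a$, never compute $\reg\,\partial(I_a)$, and explicitly defer the lower bound $\reg\,\partial(I_a)\ge \reg I_a+a$ as ``the main obstacle'' and ``the delicate part of the construction''. A plan that names the hard step without carrying it out is not a proof. Moreover, the one concrete tool you offer for the upper bound, \cite[Corollary 3.2]{HTaylor}, is far too weak here: for an ideal with $m$ generators of degree $d$ it only gives $\reg\,\partial(I)\le \sum_i\deg(u_i)-2m+1$, which grows like $md$ and cannot pin $\reg\,\partial(I_a)$ down to the exact value needed to realize a prescribed difference $a$; and no intermediate-value argument is available to replace an exact computation.

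For comparison, the paper's construction is close in spirit to your first suggested strategy but increases the \emph{degree} rather than the length of the path: for $d=a+3$ it takes $I=(x_ix_{i+1}\cdots x_{i+d-1}:\ i=1,\dots,d+1)\subset K[x_1,\dots,x_{2d}]$. Linear quotients give $\reg I=d$, and both bounds on $\reg\,\partial(I)$ come from Hochster's formula applied directly to the Stanley--Reisner complex $\Delta$ with $I_\Delta=\partial(I)$ (not to its Alexander dual, as you suggest): the upper bound $\reg\,\partial(I)\le 2d-3$ follows by showing $\Delta$ has no face of cardinality $2d-2$, and the lower bound by exhibiting, for $W=\{1,\dots,d-1\}\cup\{d+2,\dots,2d\}$, the explicit cycle $z=\sum_{p,q}(-1)^{p+q}{\bf e}_{W\setminus\{p,q\}}$ in $\widetilde{H}_{2d-5}(\Delta_W;K)$, which is automatically non-bounding because $\Delta_W$ has no face of dimension $2d-4$. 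Some explicit homological computation of this kind is precisely what your proposal lacks. Your fixed-degree-$3$ lengthened-path variant (whose gradient is the edge ideal of the square of a path, so induced-matching bounds would give a growing lower bound) is plausible, but turning it into a proof would still require determining the regularity of that edge ideal exactly, which you have not done.
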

	\begin{proof}
		If $a=-1$, it is enough to consider any monomial ideal $I$ with $2$-linear resolution. Then $\partial(I)$ is a monomial prime ideal and so $\reg\,\partial(I)=1$.
		
		Now, let $a\ge0$. Put $d=a+3$ and let
		$$
		I\ =\ \sum_{i=1}^{d+1}(\prod_{j=0}^{d-1}x_{i+j})\ =\ (\prod_{j=0}^{d-1}x_{i+j}:\ i=1,\dots,d+1)\subset S=K[x_1,\dots,x_{2d}].
		$$
		
		We claim that $\reg\,I=d$ and $\reg\,\partial(I)=2d-3$. Then $\reg\,\partial(I)-\reg\,I=d-3=a$ and the statement follows.
		
		To prove that $\reg\,I=d$, we put $u_i=x_{i}x_{i+1}\cdots x_{i+d-1}$ for all $i=1,\dots,d+1$. Then $I=(u_1,\dots,u_{d+1})$ and $(u_1,\dots,u_{i-1}):(u_i)=(x_{i-1})$ for $i=2,\dots,d+1$. Hence $I$ has linear quotients and so $\reg\,I=\alpha(I)=d$.
		
		To prove that $\reg\,\partial(I)=2d-3$, we use Hochster's formula. Let $\Delta$ be the simplicial complex on vertex set $[n]$ with Stanley-Reisner ideal $I_\Delta=\partial(I)$. From Hochster's formula (\cite[Theorem 8.1.1]{HHBook}), we have
		\begin{equation}\label{eq:Hochster}
			\beta_{i,j}(S/\partial(I))\ =\ \sum_{\substack{W\subset[n]\\ |W|=j}}\dim_K\widetilde{H}_{j-i-1}(\Delta_W;K),
		\end{equation}
		where $\Delta_W$ is the restriction of $\Delta$ to $W$, and $\widetilde{H}_s(\Gamma;K)=\Ker(\partial_s)/\operatorname{Im}(\partial_{s+1})$ is the $s$-th reduced simplicial homology of the simplicial complex $\Gamma$ computed over $K$. Here $\partial_s:C_s(\Delta;K)\rightarrow C_{s-1}(\Delta;K)$ is the $K$-linear map defined as follows: $C_s(\Delta;K)$ is the $K$-vector space with basis given by the symbols ${\bf e}_F$ with $F\in\Delta$ of dimension $s-1$, and $\partial_s$ is defined by setting
		$$
		\partial_s({\bf e}_F)\ =\ \sum_{p\in F}(-1)^{\sgn(p;F)}{\bf e}_{F\setminus\{p\}},
		$$
		with $\sgn(p;F)=|\{q\in F:\ q<p\}|$. See also \cite[Section 5.1.4]{HHBook}.\smallskip
		
		First we show that $\reg\,\partial(I)\le 2d-3$. Suppose otherwise that $\reg\,\partial(I)\ge2d-2$. Since $\reg\,S/I=\reg\,I-1$, then there would exist integers $j,i$ such that $j-i\ge2d-3$ and $\widetilde{H}_{j-i-1}(\Delta_W;K)\ne0$ for some subset $W\subset[n]$ with $|W|=j$. This implies that $\dim(\Delta)\ge2d-3$. We show that this is impossible. To this end, let $H\subset[2d]$ be any subset with $|H|=2d-2$. We show that $H\notin\Delta$. This will give the desired contradiction. We can write $H=[2d]\setminus\{p,q\}$ with $p<q$. If $p\ge d$ then $H$ contains $H'=\{1,\dots,d-1\}$, and since ${\bf x}_{H'}=x_1x_2\cdots x_{d-1}=(x_1x_2\cdots x_d)/x_d\in\partial(I)$, it follows that $H'\notin\Delta$, and consequently $H\notin\Delta$. Similarly, if $q\le d+1$ then $H$ contains $H''=\{d+2,d+3,\dots,2d\}\notin\Delta$ and so $H\notin\Delta$. We may therefore assume that $p\le d-1$ and $q\ge d+2$. Hence $d,d+1\in H$. But then $H$ contains $H'''=\{1,\dots,d\}\setminus\{p\}$ and ${\bf x}_{H'''}=(x_1\cdots x_d)/x_p\in\partial(I)$. Therefore $H\notin\Delta$ and this proves our claim.
		
		Hence, $\reg\,\partial(I)\le 2d-3$. We claim that $\beta_{2,2d-2}(S/\partial(I))\ne0$. This will imply that $\reg\,\partial(I)=2d-3$ and conclude the proof.
		Let
		$$
		W=\{1,2,\dots,d-1,d+2,d+3,\dots,2d\}.
		$$
		
		We claim that $\widetilde{H}_{|W|-3}(\Delta_W;K)=\widetilde{H}_{2d-5}(\Gamma;K)\ne0$ with $\Gamma=\Delta_W$. Using (\ref{eq:Hochster}) this will show that $\beta_{2,2d-2}(S/\partial(I))\ne0$ and conclude the proof.
		
		We have $\widetilde{H}_{2d-5}(\Gamma;K)=\Ker(\partial_{2d-5})/\operatorname{Im}(\partial_{2d-4})$ where $\partial_{s}:C_{s}(\Gamma;K)\rightarrow C_{s-1}(\Gamma;K)$ is defined as before. We claim that $C_{2d-4}(\Gamma;K)=0$. Let $H$ be any subset of $W$ of size $2d-3$. Then $H=W\setminus\{p\}$ for some $p$. If $p\in\{1,2,\dots,d-1\}$, then $H$ contains $H'=\{d+2,d+3\dots,2d\}\notin\Delta$, and so $H\notin\Gamma$. Otherwise $p\in\{d+2,d+3,\dots,2d\}$ and by symmetry $H\notin\Gamma$. Therefore, $\widetilde{H}_{2d-5}(\Gamma;K)=\Ker(\partial_{2d-5})$.
		
		Next, notice that the elements of the form
		$$
		W\setminus\{p,q\}\ =\ (\{1,2,\dots,d-1\}\setminus\{p\})\cup(\{d+2,d+3,\dots,2d\}\setminus\{q\})
		$$
		with $p\in\{1,\dots,d-1\}$ and $q\in\{d+2,\dots,2d\}$ are all in $\Gamma$ and are of dimension $2d-5$. Suppose otherwise that $W\setminus\{p,q\}\notin\Delta$, then ${\bf x}_{W\setminus\{p,q\}}\in\partial(I)$. This implies that $W\setminus\{p,q\}$ should contain a set of the form $\{r,r+1,\dots,r+(d-1)\}\setminus\{s\}$ for some $1\le r\le d+1$ and some $s=r+h$ with $0\le h\le d-1$. However this is easily seen to be impossible. It follows that $W\setminus\{p,q\}\in\Delta$.
		
		Next, we claim that the element
		$$
		z\ =\ \sum_{\substack{1\le p\le d-1\\[2pt] d+2\le q\le 2d}}(-1)^{p+q}\,{\bf e}_{W\setminus\{p,q\}}\in C_{2d-5}(\Gamma;K)
		$$
		is a cycle, that is $\partial_{2d-5}(z)=0$. Then $\Ker(\partial_{2d-5})\ne0$ and this will conclude the proof. To this end, we can write
		$$
		\partial_{2d-5}(z)\ =\ \sum_{\substack{1\le p<q\le d-1\\[2pt] d+2\le r\le 2d\\}}c_{pqr}\,{\bf e}_{W\setminus\{p,q,r\}}+\sum_{\substack{1\le p\le d-1\\[2pt] d+2\le q<r\le 2d\\}}c_{pqr}\,{\bf e}_{W\setminus\{p,q,r\}},
		$$
		with $c_{pqr}\in K$.
		
		We must prove that $c_{pqr}=0$ for all $p,q,r$. There are two cases to consider. We verify only one, and leave the other to the reader. Let $\{p<q<r\}\subset W$ with $p,q\in\{1,2,\dots,d-1\}$ and $r\in\{d+2,d+3,\dots,2d\}$. Then, in $z$ there are only two summands $(-1)^{p+r}{\bf e}_{W\setminus\{p,r\}}$ and $(-1)^{q+r}{\bf e}_{W\setminus\{q,r\}}$ whose derivation contributes to the sign of ${\bf e}_{W\setminus\{p,q,r\}}$ in the expansion of $\partial_{2d-5}(z)$. Since $p<q$, we have
		\begin{align*}
			c_{pqr}\ &=\ (-1)^{p+r}(-1)^{\sgn(W\setminus\{p,r\};q)}+(-1)^{q+r}(-1)^{\sgn(W\setminus\{q,r\};p)}\\
			&=\ (-1)^{p+r}(-1)^{|\{s\in W\setminus\{p,r\}\,:\,s<q\}|}+(-1)^{q+r}(-1)^{|\{s\in W\setminus\{q,r\}\,:\,s<p\}|}\\
			&=\ (-1)^{p+r}(-1)^{q-2}+(-1)^{q+r}(-1)^{p-1}\\
			&=\ (-1)^{p+q+r}(1-1)\ =\ 0,
		\end{align*}
		as desired.
	\end{proof}
	
	On the other hand, in support of the normal behaviour we have:
	\begin{Corollary}
		Let $I\subset S$ be a monomial ideal. Then $\partial(\m^kI)$ has linear quotients for all $k\gg0$. In particular, $\reg\,\partial(\m^kI)<\reg(\m^kI)$ for all $k\gg0$.
	\end{Corollary}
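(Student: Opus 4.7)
The plan is to combine the identity $\partial(\m^k I)=\m^k\partial(I)$, valid for all $k\ge 1$, with the general principle that for any monomial ideal $J$, the ideal $\m^k J$ has linear quotients once $k$ is sufficiently large.

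First I would establish the identity. For each variable $x_j$, I claim that $(\m^k I:x_j)=\m^{k-1}I+\m^k(I:x_j)$. The reverse inclusion is immediate. For the forward one, a monomial $v$ with $x_j v=wu\in\m^k I$ (where $w\in\m^k$ and $u\in\mathcal{G}(I)$) satisfies $x_j\mid w$ or $x_j\mid u$, giving $v=(w/x_j)u\in\m^{k-1}I$ or $v=w(u/x_j)\in\m^k(I:x_j)$ respectively. Summing over $j$ yields $\partial(\m^k I)=\m^{k-1}I+\m^k\partial(I)$. Writing $u=x_j(u/x_j)\in\m\,\partial(I)$ for any $u\in\mathcal{G}(I)$ and any $j\in\supp(u)$ shows $I\subseteq\m\,\partial(I)$, whence $\m^{k-1}I\subseteq\m^k\partial(I)$, and the identity follows.

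Second I would show that for every monomial ideal $J$ and all $k\gg 0$, the ideal $\m^k J$ has linear quotients. The key input is the minimality criterion: $wu\in\mathcal{G}(\m^k J)$ if and only if no $u'\in\mathcal{G}(J)$ of strictly smaller degree than $u$ divides $wu$. This stratifies $\mathcal{G}(\m^k J)$ by the degree of the participating generator of $J$. My proposed ordering lists the generators by increasing total degree and, within each degree, by the lexicographic order on exponent vectors. Within one stratum the generators form a shifted Veronese-like structure whose colon ideals along lex are generated by variables; across strata, the colon of the accumulated lower-degree generators with a new higher-degree generator is controlled by the variables that witness the non-absorption condition. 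For $k$ large the absorption pattern depends only on the finite combinatorial data of $\mathcal{G}(J)$, so verifying that every such colon is variable-generated reduces to a finite case analysis.

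Third, with $\partial(\m^k I)=\m^k\partial(I)$ having linear quotients for $k\gg 0$, its regularity equals $\omega(\partial(\m^k I))$. Since every minimal generator of $\partial(J)$ has the form $u/x_i$ with $u\in\mathcal{G}(J)$, we have $\omega(\partial(J))\le\omega(J)-1$ in general, and hence $\omega(\partial(\m^k I))\le\omega(\m^k I)-1$. Combining with $\reg\,\m^k I\ge\omega(\m^k I)$ gives
$$\reg\,\partial(\m^k I)\ =\ \omega(\partial(\m^k I))\ \le\ \omega(\m^k I)-1\ <\ \omega(\m^k I)\ \le\ \reg\,\m^k I,$$
which is the desired strict inequality. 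The main obstacle is the second step: rigorously verifying that the degree-then-lex ordering produces variable-generated colons for all sufficiently large $k$, which requires pinning down the asymptotic stabilization of the absorption structure and carrying out the cross-stratum colon analysis. The rest is bookkeeping built on top of the identity $\partial(\m^k I)=\m^k\partial(I)$.
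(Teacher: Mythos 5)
Your overall route coincides with the paper's: first prove the identity $\partial(\m^kI)=\m^k\partial(I)$, then reduce to the assertion that $\m^kJ$ has linear quotients for every monomial ideal $J$ and all $k\gg0$, and finally conclude via $\reg=\omega$ for ideals with linear quotients together with $\omega(\partial(\m^kI))<\omega(\m^kI)\le\reg(\m^kI)$. Your derivation of the identity through $(\m^kI:x_j)=\m^{k-1}I+\m^k(I:x_j)$ and $I\subseteq\m\,\partial(I)$ is correct and is just a hands-on version of the Leibniz-rule computation $\partial(\m^kI)=\partial(\m^k)I+\m^k\partial(I)=\m^{k-1}I+\m^k\partial(I)$ used in the paper. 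The final regularity comparison is also exactly the paper's.

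The genuine gap is your second step. The statement that $\m^kJ$ has linear quotients for all $k\gg0$ is not a routine bookkeeping fact: it is a theorem of Ficarra, Herzog and Moradi (\cite[Theorem 1.6]{FHM}), which the paper simply cites. What you offer in its place is a strategy, not a proof: the assertions that within a degree stratum the generators form a ``shifted Veronese-like structure whose colon ideals along lex are generated by variables,'' that the cross-stratum colons are ``controlled by the variables that witness the non-absorption condition,'' and that everything ``reduces to a finite case analysis'' for large $k$ are precisely the claims that would need to be established, and none of them is verified. You acknowledge this yourself by calling it ``the main obstacle.'' As written, the argument is incomplete at exactly the point where all the difficulty lies; either supply a full proof of the eventual linear-quotients property of $\m^kJ$ (including an explicit admissible order and a verification of every colon ideal, with a concrete bound on $k$), or cite the known result, in which case your proof becomes identical to the paper's.
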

	\begin{proof}
		It is clear that $\partial(\m^k)=\m^{k-1}$ and $I\subset\m\,\partial(I)$ for all integers $k\ge1$. Hence $\m^{k-1}I\subset\m^k\partial(I)$. Using Leibniz's rule which is valid for the derivation of the product of monomial ideals, we have
		$$
		\partial(\m^kI)\ =\ \partial(\m^k)I+\m^k\partial(I)\ =\ \m^{k-1}I+\m^k\partial(I)\ =\ \m^k\partial(I)
		$$
		for all $k\ge1$. It is proved in \cite[Theorem 1.6]{FHM} that $\m^kJ$ has linear quotients for any monomial ideal $J\subset S$ and all $k\gg0$. Hence $\partial(\m^kI)=\m^k\partial(I)$ has linear quotients for all $k\gg0$. For a monomial ideal $J\subset S$ with linear quotients its regularity is $\omega(J)$. Since we obviously have $\omega(\partial(\m^kI))<\omega(\m^kI)$, we conclude that $\reg\,\partial(\m^kI)<\reg(\m^kI)$ for all $k\gg0$.
	\end{proof}
	
	\section{Classes of ideals closed under the gradient operation}
	
	In this section, we present several families $\mathcal{C}$ of monomial ideals which are closed under the gradient operation: if $I\in\mathcal{C}$ then $\partial(I)\in\mathcal{C}$ as well.
	
	Recall that a monomial ideal $I\subset S$ is called \textit{polymatroidal} if the exponent vectors of the minimal monomial generators of $I$ form the set of bases of a discrete polymatroid, see \cite[Section 12.6]{HHBook}. Polymatroidal ideals are generated in one degree and they have linear quotients. Hence, they have linear resolution.
	
	\begin{Theorem}\label{Thm:partial-polym}
		Let $I\subset S$ be a polymatroidal ideal. Then $\partial(I)$ is polymatroidal.
	\end{Theorem}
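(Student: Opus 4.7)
The plan is to verify directly that the exponent vectors of the minimal generators of $\partial(I)$ form the set of bases of a discrete polymatroid. Since $I$ is polymatroidal, it is equigenerated in some degree $d=\alpha(I)=\omega(I)$, so every monomial $u/x_i$ with $u\in\mathcal{G}(I)$ and $i\in\supp(u)$ has degree $d-1$; none of them properly divides another, hence
\[
\mathcal{G}(\partial(I))=\{u/x_i : u\in\mathcal{G}(I),\ i\in\supp(u)\}.
\]
Writing $B(P)=\{\mathbf{a}\in\mathbb{Z}_{\ge0}^n:\mathbf{x}^{\mathbf{a}}\in\mathcal{G}(I)\}$ for the set of bases of the polymatroid $P$ attached to $I$, the corresponding set of exponent vectors of $\mathcal{G}(\partial(I))$ is
\[
B'=\{\mathbf{a}-e_i:\mathbf{a}\in B(P),\ a_i\ge1\},
\]
and each element of $B'$ has modulus $d-1$.

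Next, I would verify that $B'$ satisfies the exchange property. Given $\mathbf{c},\mathbf{c}'\in B'$ and $k$ with $c_k>c_k'$, I would lift $\mathbf{c}$ and $\mathbf{c}'$ to bases $\mathbf{a}=\mathbf{c}+e_i$ and $\mathbf{b}=\mathbf{c}'+e_j$ of $P$, choosing the lifts so that $a_k>b_k$: a convenient rule is to take $j=k$ whenever $\mathbf{c}'+e_k\in B(P)$, and to take $i\ne k$ whenever $\mathbf{c}$ admits some extension in a direction different from $k$. The exchange property of $B(P)$ applied at the index $k$ then produces $\ell$ with $a_\ell<b_\ell$ and $\mathbf{a}-e_k+e_\ell\in B(P)$. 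Subtracting $e_i$ yields $\mathbf{c}-e_k+e_\ell\in B'$, since the $i$-th coordinate of $\mathbf{a}-e_k+e_\ell$ remains at least $1$ (because $k\ne i$).

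The main obstacle is transferring the strict inequality $a_\ell<b_\ell$ to $c_\ell<c_\ell'$. From $c_\ell=a_\ell-\delta_{\ell i}$ and $c_\ell'=b_\ell-\delta_{\ell j}$, the inequality is immediate unless $\ell=j\ne i$ with $a_\ell=b_\ell-1$. I would dispose of this exceptional configuration by invoking the \emph{symmetric} exchange property for $B(P)$, which guarantees $\mathbf{b}-e_j+e_k=\mathbf{c}'+e_k\in B(P)$; replacing $\mathbf{b}$ by this new lift and rerunning the argument removes the coincidence. The degenerate subcases in which such a replacement is forced and still leads to the same $\ell$ are handled by additionally reselecting the lift of $\mathbf{c}$ as $\mathbf{c}+e_\ell\in B(P)$, which is available precisely when the previous exchange succeeded. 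A structurally cleaner alternative that sidesteps this case analysis altogether is to observe that $B'$ is the set of bases of the truncation polymatroid with rank function $\rho'(S)=\min(\rho_P(S),\,d-1)$; monotonicity and submodularity of $\rho_P$ are preserved by this truncation, so $\rho'$ is a polymatroid rank function, and $B'$ is the set of bases of the associated polymatroid. In either route, the conclusion is that $\partial(I)$ is polymatroidal.
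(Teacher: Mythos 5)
Your ``structurally cleaner alternative'' is in substance exactly the paper's proof: the paper sets $Q=\{\mathbf{u}\in P:|\mathbf{u}|\le d-1\}$ and cites \cite[Lemma 12.2.3.(a)]{HHBook} for the fact that this truncation is again a discrete polymatroid, then identifies $B(Q)$ with the exponent vectors of $\mathcal{G}(\partial(I))$ --- which is the same observation you make via the truncated rank function $\rho'=\min(\rho_P,d-1)$. That identification does need the one-line argument that every $\mathbf{u}\in P$ with $|\mathbf{u}|=d-1$ augments to a basis of $P$ (so that $B(Q)=B'$), but this is immediate from the augmentation axiom for discrete polymatroids. By contrast, your first route --- verifying the exchange property for $B'$ directly --- is not a complete argument as written: the lifting rules (``take $j=k$ whenever\dots, take $i\ne k$ whenever\dots'') are neither exhaustive nor shown to be simultaneously realizable with $a_k>b_k$, and the treatment of the exceptional configuration $\ell=j\ne i$ by ``reselecting the lift, which is available precisely when the previous exchange succeeded'' is circular. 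If you want a self-contained elementary proof along those lines, the clean way is to prove the truncation lemma itself (or quote it), rather than chase the coincidences between $a_\ell<b_\ell$ and $c_\ell<c'_\ell$. Since your second route carries the proof and coincides with the paper's, the proposal is correct.
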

	\begin{proof}
		Let $P$ be the discrete polymatroid on $[n]$ with $B(P)=\{{\bf u}:{\bf x^u}\in\mathcal{G}(I)\}$ as the set of bases of $P$. We have $\rank\,P=\alpha(I)=d$. Let $Q=\{{\bf u}\in P:|{\bf u}|\le d-1\}$. By \cite[Lemma 12.2.3.(a)]{HHBook}, $Q$ is a discrete polymatroid as well. Finally, it is clear that $B(Q)=\{{\bf u}:{\bf x^u}\in\mathcal{G}(\partial(I))\}$, and this concludes the proof.
	\end{proof}
	
	Next, we want to extend Theorem \ref{Thm:partial-polym} to a wider class of ideals. For a homogeneous ideal $I\subset S$ and an integer $j\ge0$, we let $I_{\langle j\rangle}$ be the ideal generated by all homogeneous polynomials of degree $j$ belonging to $I$. The following technical lemma is required.
	
	\begin{Lemma}\label{Lem:component}
		Let $I\subset S$ be a monomial ideal. Then $\partial(I)_{\langle j\rangle}=\partial(I_{\langle j+1\rangle})$ for all $j$.
	\end{Lemma}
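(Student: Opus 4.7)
My plan is to prove the equality $\partial(I)_{\langle j\rangle}=\partial(I_{\langle j+1\rangle})$ by double inclusion, using that both sides are monomial ideals so it suffices to track monomials. The key preliminary observation I would isolate is an extension of the defining formula $\partial(I)=(u/x_i:u\in\mathcal{G}(I),\,i\in\supp(u))$, namely: if $w$ is \emph{any} monomial in $I$ (not necessarily a minimal generator) and $k\in\supp(w)$, then $w/x_k\in\partial(I)$. To verify this, I would write $w=u\cdot m$ with $u\in\mathcal{G}(I)$ and $m$ a monomial, and split into two cases; if $k\in\supp(u)$ then $w/x_k=(u/x_k)\cdot m\in\partial(I)$, while if $k\in\supp(m)$ then $w/x_k=u\cdot(m/x_k)$, and $u$ itself lies in $\partial(I)$ because for any $\ell\in\supp(u)$ we have $u=x_\ell(u/x_\ell)\in\m\,\partial(I)\subseteq\partial(I)$.

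With this tool in hand, the inclusion $\partial(I_{\langle j+1\rangle})\subseteq\partial(I)_{\langle j\rangle}$ is immediate: a minimal generator of $\partial(I_{\langle j+1\rangle})$ has the form $u/x_i$ where $u$ is a monomial of degree $j+1$ belonging to $I$ and $i\in\supp(u)$, so the preliminary observation gives $u/x_i\in\partial(I)$, and since its degree is $j$ it lies in $\partial(I)_{\langle j\rangle}$.

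For the reverse inclusion I would take a monomial $v$ of degree $j$ in $\partial(I)$. Since $\partial(I)$ is a monomial ideal, $v$ is divisible by some generator $u/x_i$ with $u\in\mathcal{G}(I)$ and $i\in\supp(u)$, say $v=(u/x_i)\cdot m$ for a monomial $m$. Setting $w=u\cdot m=v\cdot x_i$, I get a monomial of $I$ of degree $j+1$, so $w\in I_{\langle j+1\rangle}$, and since $i\in\supp(w)$, applying the preliminary observation to the ideal $I_{\langle j+1\rangle}$ yields $v=w/x_i\in\partial(I_{\langle j+1\rangle})$.

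The only substantive step, and the main obstacle, is the preliminary observation: although $\partial(I)$ is defined from the minimal generators of $I$, one must check that it is actually closed under the map $w\mapsto w/x_k$ for every monomial $w\in I$. Everything else reduces to routine degree bookkeeping.
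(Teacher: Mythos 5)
Your proof is correct, but it takes a more hands-on route than the paper. The paper's proof is a one-line computation: it invokes the identity $\partial(I)=\sum_{i\in[n]}(I:x_i)$ (stated earlier in the paper) and then commutes the operation $(-)_{\langle j\rangle}$ first with the finite sum and then with each colon, via $(I:x_i)_{\langle j\rangle}=(I_{\langle j+1\rangle}:x_i)$. Your preliminary observation --- that $w/x_k\in\partial(I)$ for every monomial $w\in I$ and every $k\in\supp(w)$ --- is precisely the inclusion $(I:x_k)\subseteq\partial(I)$, so the two arguments rest on the same underlying fact; you prove it directly from the generator description of $\partial(I)$ (with the harmless convention that $I\neq S$, so minimal generators have nonempty support), whereas the paper takes the colon identity as known. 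What your version buys is self-containedness: the step $(I:x_i)_{\langle j\rangle}=(I_{\langle j+1\rangle}:x_i)$, which the paper leaves unjustified, is exactly the degree bookkeeping you carry out explicitly in your two inclusions (a degree-$j$ monomial $v$ with $x_iv\in I$ corresponds to the degree-$(j+1)$ monomial $w=x_iv\in I_{\langle j+1\rangle}$, and conversely). What the paper's version buys is brevity and the conceptual point that $\partial$ is a sum of colon operators, each of which interacts cleanly with taking graded components. Both arguments are complete and correct.
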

	\begin{proof}
		As noticed before, we can write $\partial(I)=\sum_{i\in[n]}(I:x_i)$. Then
		$$
		\partial(I)_{\langle j\rangle}\ =\ (\sum_{i\in[n]}(I:x_i))_{\langle j\rangle}\ =\ \sum_{i\in[n]}(I:x_i)_{\langle j\rangle}\ =\ \sum_{i\in[n]}(I_{\langle j+1\rangle}:x_i)\ =\ \partial(I_{\langle j+1\rangle}),
		$$
		as desired.
	\end{proof}
	
	Recall that a monomial ideal $I\subset S$ (not necessarily squarefree) is called \textit{vertex splittable} \cite{MK} if it can be obtained by the following recursive procedure.
	\begin{enumerate}
		\item[(i)] If $u$ is a monomial and $I=(u)$, $I=(0)$ or $I=S$, then $I$ is vertex splittable.\smallskip
		\item[(ii)] If there exists a variable $x_i$ and vertex splittable ideals $I_1\subset S$ and $I_2\subset K[x_1,\dots,x_{i-1},x_{i+1},\dots,x_n]$ such that $I=x_iI_1+I_2$, $I_2\subseteq I_1$ and $\mathcal{G}(I)$ is the disjoint union of $\mathcal{G}(x_iI_1)$ and $\mathcal{G}(I_2)$, then $I$ is vertex splittable.
	\end{enumerate}
	
	In the case (ii), the decomposition $I=x_iI_1+I_2$ is called a \textit{vertex splitting} of $I$ and $x_i$ is called a \textit{splitting vertex} of $I$.
	
	A vertex splittable ideal has linear quotients \cite[Theorem 2.4]{MK}. This result is proved in \cite{MK} under the assumption that $I$ is squarefree. However this assumption is unnecessary, and the proof given in \cite[Theorem 2.4]{MK} works for any vertex splittable ideal as we consider above.
	
	In the squarefree case, writing $I=I_\Delta$, Moradi and Khosh-Ahang proved that $I$ is vertex splittable if and only if the Alexander dual $\Delta^\vee$ is a vertex decomposable simplicial complex, see \cite[Theorem 2.3]{MK}.
	
	A monomial ideal $I\subset S$ is called \textit{componentwise polymatroidal} if $I_{\langle j\rangle}$ is polymatroidal for all $j\ge0$. Any polymatroidal ideal is componentwise polymatroidal. It is proved in \cite[Theorem 6]{Fic0} that componentwise discrete polymatroidal ideals have linear quotients. In fact, it is noticed in \cite[Proposition 2]{CF} that they even are vertex splittable. Combining this fact with Theorem \ref{Thm:partial-polym} and Lemma \ref{Lem:component} we obtain that
	\begin{Corollary}
		Let $I\subset S$ be a componentwise polymatroidal ideal. Then $\partial(I)$ is again componentwise polymatroidal, in particular it is vertex splittable.
	\end{Corollary}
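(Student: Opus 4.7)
The plan is to verify the componentwise polymatroidal property degree by degree and then invoke the cited structural result to get vertex splittability. Concretely, I would fix an integer $j\ge 0$ and look at the degree-$j$ component $\partial(I)_{\langle j\rangle}$ of the gradient, aiming to show that this component is polymatroidal for every such $j$.

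The first step is to identify the right ideal to compare with. By Lemma \ref{Lem:component}, applied to the monomial ideal $I$, one has
$$
\partial(I)_{\langle j\rangle}\ =\ \partial(I_{\langle j+1\rangle}).
$$
So the task reduces to showing that the gradient of the degree-$(j+1)$ component of $I$ is polymatroidal. (The corner case where $j+1<\alpha(I)$ gives $I_{\langle j+1\rangle}=0$ and hence a zero ideal on both sides, which is harmless.)

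The second step is to use the hypothesis: since $I$ is componentwise polymatroidal, the ideal $I_{\langle j+1\rangle}$ is polymatroidal by definition. Theorem \ref{Thm:partial-polym} then yields immediately that $\partial(I_{\langle j+1\rangle})$ is polymatroidal. Combining with the identification above, $\partial(I)_{\langle j\rangle}$ is polymatroidal for every $j\ge 0$, which is exactly the statement that $\partial(I)$ is componentwise polymatroidal.

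For the final clause, I would simply invoke the result from \cite{Fic0} (also recorded as \cite[Proposition 2]{CF}) stating that every componentwise polymatroidal ideal is vertex splittable; applying it to $\partial(I)$ finishes the proof. There is no genuine obstacle here: all three ingredients (the component-gradient swap from Lemma \ref{Lem:component}, the polymatroidal preservation from Theorem \ref{Thm:partial-polym}, and the vertex splittability of componentwise polymatroidal ideals) have been put in place precisely so that this corollary falls out by concatenation.
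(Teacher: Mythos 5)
Your proposal is correct and follows exactly the route the paper intends: combine Lemma \ref{Lem:component} (to identify $\partial(I)_{\langle j\rangle}$ with $\partial(I_{\langle j+1\rangle})$), Theorem \ref{Thm:partial-polym} (gradients of polymatroidal ideals are polymatroidal), and \cite[Proposition 2]{CF} for vertex splittability. No issues.
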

	
	Another prominent class of monomial ideals is the class of (strongly) stable ideals. For a monomial $u\in S$ with $u\ne 1$, recall that the \textit{support} of $u$ is the set $$\supp(u)=\{i\in[n]:\ x_i\ \textup{divides}\ u\}$$ and the \textit{maximum} of $u$ is the integer $$\max(u)=\max\supp(u)=\max\{i\in[n]:\ x_i\ \textup{divides}\ u\}.$$
	
	A monomial ideal $I\subset S$ is called \textit{stable} if for all monomials $u\in I$ with $u\ne 1$, and all $i<\max(u)$, we have that $x_i(u/x_{\max(u)})\in I$. Whereas, $I$ is called \textit{strongly stable} if for all monomials $u\in I$ and all $i<j$ with $x_j$ dividing $u$, we have $x_i(u/x_j)\in I$.
	
	Any strongly stable ideal is stable, but the converse does not hold true in general. (Strongly) stable ideals have linear quotients, in fact they even are vertex splittable, see \cite[Proposition 1]{CF}. Hence $\reg\,I=\omega(I)$ for any (strongly) stable ideal $I\subset S$.
	
	\begin{Proposition}\label{Prop:stable}
		Let $I$ be a $($strongly$)$ stable monomial ideal. Then $\partial(I)$ is again $($strongly$)$ stable. In particular $\reg\,\partial(I)\le\reg\,I$.
	\end{Proposition}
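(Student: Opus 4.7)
My plan is to verify the (strong) stability of $\partial(I)$ directly from the definition, using the characterization $\partial(I)=\sum_{i\in[n]}(I:x_i)$ already recorded in Section~\ref{sec1}. The key observation is that a monomial $v$ lies in $\partial(I)$ exactly when $vx_\ell\in I$ for some $\ell\in[n]$. The entire argument then consists of applying (strong) stability of $I$ to a witnessing monomial $u:=vx_\ell$ in order to produce an element of $I$ of the form $x_i(v/x_k)\cdot x_s$, which in turn certifies that $x_i(v/x_k)\in(I:x_s)\subset\partial(I)$.

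The strongly stable case is immediate. Given $v\in\partial(I)$ and $i<j$ with $x_j$ dividing $v$, I choose any $\ell$ with $u=vx_\ell\in I$. Then $x_j$ divides $u$ too, so strong stability of $I$ yields $x_i(u/x_j)=x_i(v/x_j)\cdot x_\ell\in I$, whence $x_i(v/x_j)\in(I:x_\ell)\subset\partial(I)$.

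The plain stable case requires a short case split, since the stability axiom only permits lowering $x_{\max(u)}$; consequently the position of $\ell$ relative to $k:=\max(v)$ matters. If $\ell\le k$ then $\max(u)=k$, and stability gives $x_i(u/x_k)=x_i(v/x_k)\cdot x_\ell\in I$, hence $x_i(v/x_k)\in(I:x_\ell)\subset\partial(I)$. If instead $\ell>k$ then $\max(u)=\ell$, and since $i<k<\ell$, stability applied to $u$ at $\ell$ yields $x_i(u/x_\ell)=x_iv\in I$; rewriting $x_iv=x_i(v/x_k)\cdot x_k$ gives $x_i(v/x_k)\in(I:x_k)\subset\partial(I)$. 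Either subcase verifies the stability condition for $\partial(I)$ at $v$ and $i<\max(v)$.

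For the final assertion, since $\partial(I)$ is (strongly) stable it has linear quotients by the discussion preceding the proposition, so $\reg\,\partial(I)=\omega(\partial(I))$. Every minimal generator of $\partial(I)$ has the form $u/x_i$ with $u\in\mathcal{G}(I)$ and thus degree at most $\omega(I)-1$, so $\reg\,\partial(I)\le\omega(I)-1\le\omega(I)=\reg\,I$. The only genuinely non-trivial point I foresee is the case distinction in the plain stable case, forced by the asymmetry of the stability axiom relative to the strongly stable one; everything else is bookkeeping.
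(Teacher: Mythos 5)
Your proof is correct and follows essentially the same route as the paper: verify the (strong) stability axiom directly for $\partial(I)=\sum_i(I:x_i)$ by splitting on whether the witnessing index $\ell$ exceeds $\max(v)$, which matches the paper's two cases (your subcases put the boundary $\ell=k$ on the other side, but the argument is identical). Your derivation of the regularity consequence via linear quotients and $\omega(\partial(I))\le\omega(I)-1$ is also the intended one.
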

	\begin{proof}
		We prove that if $I$ is stable, then $\partial(I)$ is stable as well. The proof in the case that $I$ is strongly stable is analogous. Let $u\in\partial(I)$ be a monomial and let $i<\max(u)$. We must prove that $x_i(u/x_{\max(u)})\in\partial(I)$. Since $u\in\partial(I)$, there exists $j$ such that $v=x_ju\in I$. We distinguish the two possible cases.\medskip
		
		\textbf{Case 1.} Let $j<\max(v)$. Then $i<\max(u)=\max(v)$. Since $I$ is stable, it follows that $x_i(v/x_{\max(v)})\in I$. Since $j<\max(v)$ we have that $x_j$ divides $x_i(v/x_{\max(v)})$. Hence $(x_i(v/x_{\max(v)}))/x_j=(x_i(v/x_j))/x_{\max(u)}=x_i(u/x_{\max(u)})\in\partial(I)$, as desired.\smallskip
		
		\textbf{Case 2.} Let $j=\max(v)$. Then $i<\max(u)<j=\max(v)$. Since $I$ is stable, we have that $x_i(v/x_{\max(v)})=x_i(v/x_j)=x_iu\in I$. Using that $x_{\max(u)}<j$, it follows that $x_{\max(u)}$ divides $x_iu\in I$. Hence $x_i(u/x_{\max(u)})\in\partial(I)$, as desired.
	\end{proof}

	\section{Stanley-Reisner ideals with differential linear resolution}

	For a homogeneous ideal $I\subset S$, we put $\partial^0(I)=I$, $\partial^1(I)=\partial(I)$ and for any integer $\ell>1$ we put $\partial^\ell(I)=\partial(\partial^{\ell-1}(I))$ recursively.
	
	We have seen in Theorem \ref{Thm:linRes-Partial} that even if $I\subset S$ is a monomial ideal with linear resolution, the regularity $\reg\,\partial(I)$ can exceed any given number $a\ge\reg\,I=\alpha(I)$. Motivated by this fact, we introduce the following concept.
	\begin{Definition}
		\rm We say that a homogeneous ideal $I\subset S$ has \textit{differential linear resolution} if $\partial^\ell(I)$ has linear resolution for all $\ell=0,\dots,\alpha(I)$. That is,
		$$
		\reg\,\partial^\ell(I)\ =\ \alpha(I)-\ell,
		$$
		for all $\ell=0,\dots,\alpha(I)$.
	\end{Definition}
	
	By Theorem \ref{Thm:partial-polym} and Proposition \ref{Prop:stable} we know that polymatroidal ideals and equigenerated (strongly) stable ideals have differential linear resolution.\smallskip
	
	In the squarefree case, we have the following result.
	\begin{Theorem}\label{Thm:sqfree-d}
		Let $I\subset S$ be a squarefree monomial ideal equigenerated in degree $d$. If $d\in\{0,1,2,n-2,n-1,n\}$, the following conditions are equivalent.
		\begin{enumerate}
			\item[\textup{(a)}] $I$ has linear resolution.
			\item[\textup{(b)}] $I$ has linear quotients.
			\item[\textup{(c)}] $I$ is vertex splittable.
			\item[\textup{(d)}] $I$ has differential linear resolution.
		\end{enumerate}
	\end{Theorem}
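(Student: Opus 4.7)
The plan is to reduce, for each of the six allowed values of $d$, to proving the implications (a) $\Rightarrow$ (c) and (a) $\Rightarrow$ (d), since (c) $\Rightarrow$ (b) $\Rightarrow$ (a) holds for any equigenerated monomial ideal by \cite[Theorem 2.4]{MK} and \cite[Corollary 8.2.14]{HHBook}, while (d) $\Rightarrow$ (a) is trivial by taking $\ell = 0$. The extreme cases $d \in \{0, 1, n\}$ are immediate: one has $I = S$, or $I$ is generated by a subset of variables with $\partial(I) = S$, or $I = (x_1 \cdots x_n)$ and $\partial^\ell(I) = I_{n, \mathbf{1}, n - \ell}$ is a squarefree Veronese, hence polymatroidal. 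For $d = 2$, $I$ is an edge ideal $I(G)$: Fr\"oberg's theorem combined with the vertex decomposability of the Alexander dual of a chordal-complement graph yields (a) $\Leftrightarrow$ (b) $\Leftrightarrow$ (c), while $\partial(I) = \m$ and $\partial^2(I) = S$ immediately settle (d).

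For $d = n - 1$, write $\mathcal{G}(I) = \{\mathbf{x}_{[n] \setminus \{i\}} : i \in A\}$ for some $\emptyset \neq A \subseteq [n]$. A direct colon computation shows that any ordering of the generators yields linear quotients with each colon ideal generated by a single variable, and a recursive splitting on any $x_i$ with $i \in A$ gives vertex splittability; hence (a), (b), (c) all hold. Moreover,
\begin{equation*}
\partial(I) \;=\; \bigl(\mathbf{x}_T : T \in \tbinom{[n]}{n - 2},\; A \not\subseteq T\bigr),
\end{equation*}
and I verify that the complementary family of $2$-subsets $\{e \in \binom{[n]}{2} : e \cap A \neq \emptyset\}$ constitutes the bases of the rank-$2$ matroid on $[n]$ in which $[n] \setminus A$ is a single parallel class and each element of $A$ is its own rank-$1$ flat. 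Therefore $\partial(I)$ is polymatroidal, and Theorem \ref{Thm:partial-polym} propagates polymatroidality (and thus linear resolution) to every iterated gradient $\partial^\ell(I)$, giving (d).

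The deep case is $d = n - 2$: here $I = (\mathbf{x}_{[n] \setminus e} : e \in E)$ is the complementary edge ideal of a graph $G = ([n], E)$, and Alexander duality with Eagon--Reiner reveals that (a) is equivalent to $G$ being connected with no isolated vertices. For (c) I would proceed by induction on $n$: choose a non-cut vertex $v$ of $G$ and check that the decomposition $I = x_v I_1 + I_2$ is a vertex splitting, since the inclusion $I_2 \subseteq I_1$ reduces to the assertion that every vertex of $G - v$ retains a neighbor in $G - v$, which is guaranteed by the choice of $v$. For (d), the Alexander dual of $\partial^\ell(I)$ is the pure complex on $[n]$ of dimension $\ell + 1$ whose facets are precisely the $(\ell + 2)$-subsets of $[n]$ containing some edge of $G$, and I would establish its Cohen--Macaulayness (equivalent to linear resolution of $\partial^\ell(I)$ via Eagon--Reiner) by producing a pure shelling of these facets in lexicographic order, with the connectivity of $G$ providing the requisite codimension-one intersections at each step.

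The main obstacle lies in the $d = n - 2$ case: verifying the shelling uniformly across $\ell = 0, 1, \dots, n - 2$. Polymatroidality of $\partial^\ell(I)$ typically fails for $\ell \geq 1$ (one can already construct small counterexamples on path graphs), so Theorem \ref{Thm:partial-polym} is not available as a shortcut. As a possible alternative route one can attempt to show directly that $\partial^\ell(I)$ has linear quotients in lex order on its generators, an approach that explicit calculation supports in low-dimensional cases and which, if carried through, yields both (d) and a second derivation of (c) via \cite[Theorem 2.4]{MK}.
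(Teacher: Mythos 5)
Your overall architecture matches the paper's: reduce to the implications (a) $\Rightarrow$ (c) and (a) $\Rightarrow$ (d), dispose of $d\in\{0,1,n-1,n\}$ via polymatroidality together with Theorem \ref{Thm:partial-polym}, handle $d=2$ by Fr\"oberg plus \cite[Corollary 3.8]{MK} and the observation that $\partial(I)$ is a monomial prime, and recognize $d=n-2$ as the complementary edge ideal case where (a) amounts to $G$ being connected without isolated vertices. Your matroid-theoretic identification of $\partial(I)$ in the $d=n-1$ case is correct (it is the matroidal ideal of the dual of the rank-two matroid you describe), though the paper gets this more directly by iterating Theorem \ref{Thm:partial-polym}. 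Your sketch of (a) $\Rightarrow$ (c) for $d=n-2$ via a non-cut vertex is essentially the paper's vertex splitting $I=x_nI_c(G_n)+L$.

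The genuine gap is exactly where you flag it: implication (a) $\Rightarrow$ (d) for $d=n-2$, which is the substance of the theorem (the paper devotes all of Theorem \ref{Thm:complementary} to it). Your proposed route --- a lexicographic shelling of the Alexander dual of $\partial^\ell(I)$, or lex-order linear quotients --- is not carried out, and as stated it is unlikely to go through for an arbitrary vertex labeling: the combinatorics of which $(\ell+2)$-subsets contain an edge of $G$ depends on how the labels interact with $E(G)$, and the paper itself must choose a labeling $n,n-1,\dots,1$ so that every truncation $G\setminus\{n,\dots,n-i\}$ remains connected. The paper's actual argument is a double induction proving \emph{vertex splittability} of every $\partial^\ell(I)$: it establishes the identity $\partial^\ell(I)=x_n\partial^\ell(I_c(G_n))+\partial^{\ell-1}(I_c(H))$, where $H$ is the auxiliary graph on $[n-1]$ obtained from $G_n$ by adding all edges $\{i,j\}$ with $\{i,n\}\in E(G)$, and then verifies the nontrivial containment $\partial^{\ell-1}(I_c(H))\subseteq\partial^\ell(I_c(G_n))$ by a case analysis using connectivity of $G_n$; the inductive hypothesis applies to the smaller connected graphs $G_n$ and $H$. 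Without this identity (or a verified shelling), your proof of (d) --- and hence of the equivalence --- is incomplete in the one case that does not follow from previously known results.
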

	
	The equivalence (a) $\Leftrightarrow$ (b) follows from \cite[Theorem E]{FM}. Next, if $d\in\{0,1,n-1,n\}$, then $I$ is polymatroidal and the remaining equivalences (b) $\Leftrightarrow$ (c) $\Leftrightarrow$ (d) follow from \cite[Proposition 2]{CF} and Theorem \ref{Thm:partial-polym}. When $d=2$, the equivalence (a) $\Leftrightarrow$ (c) was proved in \cite[Corollary 3.8]{MK}. Moreover, for such an ideal $I$, we have that $\partial(I)$ is a monomial prime ideal and so $\reg\,\partial(I)=1$ and $\reg\,\partial^2(I)=0$, so the equivalence (a) $\Leftrightarrow$ (d) follows in the case $d=2$. Since any equigenerated vertex splittable ideal has linear resolution, from this discussion, to complete the proof of Theorem \ref{Thm:sqfree-d} it remains to prove that statement (a) implies (c) and (d) in the case $d=n-2$. This is shown in the next
	\begin{Theorem}\label{Thm:complementary}
		Let $I\subset S$ be a squarefree monomial ideal with a $(n-2)$-linear resolution. Then $\partial^\ell(I)$ is vertex splittable and so $$\reg\,\partial^\ell(I)\ =\ n-2-\ell,$$ for all $\ell=0,\dots,n-2$.
	\end{Theorem}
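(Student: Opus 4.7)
I identify the ideal $I$ by its underlying graph data: since $I$ is squarefree generated in degree $n-2$, every $u \in \mathcal{G}(I)$ equals $x_{[n]\setminus e}$ for a unique two-element subset $e$ of $[n]$, giving a simple graph $G$ on $[n]$ with $\mathcal{G}(I) = \{x_{[n]\setminus e} : e \in E(G)\}$. A routine induction on $\ell$ would yield
\begin{equation*}
\mathcal{G}(\partial^\ell(I)) \;=\; \bigl\{x_{[n]\setminus T} : T \in \tbinom{[n]}{\ell+2},\ T \supseteq e \text{ for some } e \in E(G)\bigr\},
\end{equation*}
so $\partial^\ell(I)$ is squarefree and equigenerated in degree $n-2-\ell$, and in particular $\alpha(\partial^\ell(I))=n-2-\ell$. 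Assuming vertex splittability implies linear resolution when equigenerated, establishing the regularity assertion reduces to proving vertex splittability for every $\ell$.

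The plan is then an induction on $n$. The base cases $n\le 4$ correspond to $d=n-2\le 2$, already handled via the equivalences in Theorem~\ref{Thm:sqfree-d}, so I may assume $n\ge 5$. I would look for a single vertex $k\in[n]$ that serves as a splitting vertex for every $\partial^\ell(I)$ at once. For an arbitrary $k$, write $\mathfrak{I}^{(\ell)}_k$ for the ideal generated by the minimal generators of $\partial^\ell(I)$ not divisible by $x_k$; then the decomposition
\begin{equation*}
\partial^\ell(I)\;=\;x_k\,(\partial^\ell(I):x_k)\;+\;\mathfrak{I}^{(\ell)}_k
\end{equation*}
automatically satisfies $\mathfrak{I}^{(\ell)}_k\subseteq(\partial^\ell(I):x_k)$ (because $\partial^\ell(I)\subseteq(\partial^\ell(I):x_k)$) and the required disjointness of generating sets, so the only real content left is the vertex splittability of $(\partial^\ell(I):x_k)$ and of $\mathfrak{I}^{(\ell)}_k$.

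The cleanest case is when $G$ has an isolated vertex $k$: then $I=x_kJ$, where $J$ is the complementary edge ideal of $G\setminus k$ on the $n-1$ variables $\{x_j:j\ne k\}$, and a Leibniz-type induction on $\ell$ yields $\partial^\ell(I)=\partial^{\ell-1}(J)+x_k\,\partial^\ell(J)$, which is exactly the above splitting with $\mathfrak{I}^{(\ell)}_k=\partial^{\ell-1}(J)$ and $(\partial^\ell(I):x_k)=\partial^\ell(J)$. Since $J$ is a complementary edge ideal with $(n-3)$-linear resolution on $[n-1]$ variables, the inductive hypothesis delivers vertex splittability of both pieces. In the general case, with no isolated vertex, the two pieces cease to be pure complementary edge ideals on $[n-1]$ and instead encode both the edges of $G\setminus k$ and the one-element sets $\{v\}$ for $v\in N_G(k)$; one must either broaden the inductive hypothesis to this larger class (``complementary hypergraph ideals'' built from edges and loops), or produce a $k$ whose removal returns an honest complementary edge ideal with $(n-3)$-linear resolution.

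\textbf{Main obstacle.} The crux is the existence of the splitting vertex $k$ in the non-isolated case. One must invoke the Cohen-Macaulayness of $S/I^\vee$ furnished by the linear resolution hypothesis via Eagon-Reiner and translate it into a shedding-vertex assertion for the simplicial complex Alexander dual to $I$. I expect this to be handled by a case analysis on the structure of $G$ (presence of a leaf, a universal vertex, or $2$-connectivity), together with the explicit description of the minimal primes of $I^\vee$ as minimal transversals of the supports of the generators of $I$, which is finely constrained by the specific degree $n-2$.
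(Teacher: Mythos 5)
Your reduction to the isolated-vertex case is exactly the paper's Lemma \ref{Lem:part}, and your generic decomposition $\partial^\ell(I)=x_k(\partial^\ell(I):x_k)+\mathfrak{I}^{(\ell)}_k$ is the right shape. But the proof stops precisely where the theorem's content begins: you flag the non-isolated case as the ``main obstacle,'' propose two possible escape routes (enlarging the class to ``edges and loops,'' or finding a good $k$ via Eagon--Reiner and shedding vertices), and execute neither. That is a genuine gap, not a deferrable routine verification, and the route you lean towards (Cohen--Macaulayness of $S/I^\vee$ plus a case analysis on leaves, universal vertices, $2$-connectivity) is not carried out and is not what makes the induction close.

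The two missing ideas in the paper's argument are concrete. First, once $G$ has no isolated vertices, linear resolution forces $G$ to be \emph{connected} (\cite[Theorem B]{FM}), so one may pick any vertex $n$ with $G_n$ still connected (a non-cut vertex always exists); no Alexander-dual or shedding-vertex analysis is needed to locate $k$. Second, and decisively: writing $I=x_nJ+L$ with $J=I_c(G_n)$ and $L=({\bf x}_{[n]}/(x_ix_n):\{i,n\}\in E(G))$, one computes $\partial(I)=x_n\partial(J)+(J+\partial(L))$ and checks that $J+\partial(L)=I_c(H)$ where $H$ is $G_n$ with all edges $\{i,j\}$, $i\in N_G(n)$, $j\ne i$, adjoined --- i.e.\ the ``loops'' at $N_G(k)$ that worry you become honest edges after one differentiation, so the induction stays inside the class of complementary edge ideals of connected graphs on fewer vertices. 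Iterating gives $\partial^\ell(I)=x_n\partial^\ell(J)+\partial^{\ell-1}(I_c(H))$, and one still has to verify the containment $\partial^{\ell-1}(I_c(H))\subseteq\partial^\ell(J)$ required by the definition of vertex splitting (a short argument using connectivity of $G_n$), which your proposal does not address either. Without these steps the induction does not close.
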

	
	The proof of this result requires some preparation. The \textit{support} of a monomial ideal $I\subset S$ is the set defined as
	$$
	\supp(I)\ =\ \bigcup_{u\in\mathcal{G}(I)}\supp(u).
	$$
	\begin{Lemma}\label{Lem:part}
		Let $I\subset S$ be a monomial ideal, and let $i\notin\supp(I)$. Then
		$$
		\partial^\ell(x_iI)\ =\ x_i\partial^{\ell}(I)+\partial^{\ell-1}(I).
		$$
		If, in addition, $\partial^{\ell-1}(I)$ and $\partial^\ell(I)$ are vertex splittable, then $\partial^\ell(x_iI)$ is vertex splittable.
	\end{Lemma}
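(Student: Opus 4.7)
The plan is to first establish the stated ideal identity by induction on $\ell\ge 1$, then use it to exhibit $\partial^\ell(x_iI)$ as a vertex splitting with splitting vertex $x_i$, taking $I_1=\partial^\ell(I)$ and $I_2=\partial^{\ell-1}(I)$. The identity is the easy part: for $\ell=1$ I would invoke Leibniz's rule $\partial(JK)=\partial(J)K+J\partial(K)$ for monomial ideals (already used in the preceding Corollary) with $J=(x_i)$ and $K=I$; since $\partial((x_i))=S$, this yields $\partial(x_iI)=I+x_i\partial(I)$. For the inductive step, I would apply $\partial$ to the expression $\partial^{\ell-1}(x_iI)=x_i\partial^{\ell-1}(I)+\partial^{\ell-2}(I)$, use the additivity of $\partial$ on sums, and invoke the base case applied to the ideal $\partial^{\ell-1}(I)$. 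This relies on the observation (easy from the fact that colons by variables preserve supports) that $i\notin\supp(I)$ implies $i\notin\supp(\partial^{\ell-1}(I))$ for every $\ell$, a support statement I would verify up front by a one-line induction.

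For the second assertion, I would check the three conditions in the definition of vertex splitting. The ambient condition $I_2\subseteq K[x_1,\dots,x_{i-1},x_{i+1},\dots,x_n]$ is immediate from the support observation above. The containment $I_2\subseteq I_1$ holds because any monomial $u\in\partial^{\ell-1}(I)$ automatically satisfies $u\in(\partial^{\ell-1}(I):x_j)\subseteq\partial^\ell(I)$ for any $j$. The third condition --- and this is where the main obstacle lies --- is that $\mathcal{G}(\partial^\ell(x_iI))$ should be the disjoint union of $\mathcal{G}(x_iI_1)$ and $\mathcal{G}(I_2)$.

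Disjointness itself is easy: every generator of $x_iI_1$ is divisible by $x_i$, while no generator of $I_2$ is. The delicate point is minimality of the generators coming from $x_iI_1$. I would need to rule out the scenario that some $v\in\mathcal{G}(I_2)$ properly divides some $x_iw$ with $w\in\mathcal{G}(I_1)$. The intended argument is as follows: since $i\notin\supp(v)$, $v\mid x_iw$ forces $v\mid w$; combined with $v\in I_2\subseteq I_1$ and the minimality of $w$ in $I_1$, this forces $v=w$. But then for any $j\in\supp(v)$ the relation $v\in\partial^{\ell-1}(I)$ gives $v/x_j\in(\partial^{\ell-1}(I):x_j)\subseteq I_1$, contradicting the minimality of $w=v$ in $I_1$ --- unless $\supp(v)=\emptyset$, i.e.\ $v=1$, in which case $\partial^{\ell-1}(I)=S$ and hence $\partial^\ell(x_iI)=S$, an ideal which is vertex splittable by definition. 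Outside this trivial case, the decomposition $x_iI_1+I_2$ is a genuine vertex splitting of $\partial^\ell(x_iI)$, and the vertex splittability of $I_1$ and $I_2$ --- available by hypothesis --- transfers to $\partial^\ell(x_iI)$.
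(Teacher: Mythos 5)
Your proposal is correct and follows essentially the same route as the paper: reduce to $\ell=1$ via Leibniz's rule, obtain $\partial(x_iI)=x_i\partial(I)+I$, and read this off as a vertex splitting with splitting vertex $x_i$, $I_1=\partial^{\ell}(I)$ and $I_2=\partial^{\ell-1}(I)$. You are in fact more careful than the paper's one-line argument, which only records $i\notin\supp(I)$ and $I\subseteq\partial(I)$ and leaves the disjointness of $\mathcal{G}(x_iI_1)$ and $\mathcal{G}(I_2)$ (your observation that no minimal generator of $\partial^{\ell-1}(I)$ can remain a minimal generator of $\partial^{\ell}(I)$) implicit.
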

	\begin{proof}
		It is enough to prove the statement in the case $\ell=1$. Using Leibniz's rule which is valid for the derivation of the product of monomial ideals, we have that $\partial(x_iI)=x_i\partial(I)+I$. Since $i\notin\supp(I)$, $I\subset\partial(I)$ and both ideals are vertex splittable it follows that $\partial(x_iI)$ is vertex splittable too.
	\end{proof}
	
	Recall that by \cite{FM1,HQS} (see also \cite{FM2}) we can interpret any squarefree monomial ideal $I\subset S$ equigenerated in degree $n-2$ as the \textit{complementary edge ideal}
	$$
	I\ =\ I_c(G)\ =\ ({\bf x}_{[n]}/(x_ix_j):\{i,j\}\in E(G))
	$$
	of a finite simple graph $G$ on the vertex set $[n]=V(G)$ with edge set $E(G)$.
	
	\begin{proof}[Proof of Theorem \ref{Thm:complementary}]
		We can write $I=I_c(G)$ where $G$ is a finite simple graph on the vertex set $[n]$. We may assume that $G$ does not contain isolated vertices. In fact, assume that $n$ is an isolated vertex of $G$. Then $I=x_nJ$ where $J=I_c(H)$ has $(n-3)$-linear resolution and $H=G\setminus\{n\}$ is the graph on vertex set $[n-1]$. If the statement holds for $J$, by Lemma \ref{Lem:part} it holds for $I$ as well. Therefore, we may assume that $G$ does not contain isolated vertices. By \cite[Theorem B]{FM} it follows that $G$ is connected. Let $n,n-1,\dots,1$ be a labeling of the vertices of $G$ such that $G\setminus\{n,n-1\dots,n-i\}$ is connected for all $i=1,\dots,n$. Then
		$$
		I\ =\ x_nJ+L
		$$
		where $J=I_c(G_n)$ and $L=({\bf x}_{[n]}/(x_ix_n):\ \{i,n\}\in E(G)).$
		
		Since $G_n$ is connected on $n-1$ vertices, by induction $I_c(G_n)$ is vertex splittable. Notice that since $L$ can be regarded as a squarefree monomial ideal in $K[x_1,\dots,x_{n-1}]$ generated in degree $n-2$, it is polymatroidal and therefore it is vertex splittable. We claim that $L\subset J$. In fact, let $u={\bf x}_{[n]}/(x_ix_n)\in\mathcal{G}(L)$. Since $G_n$ is connected, we can find an edge $\{i,j\}\in E(G_n)$. Then ${\bf x}_{[n-1]}/(x_ix_j)\in I_c(G_n)$ divides $u$. Hence $u\in I_c(G_n)$ and so $L\subset I_c(G_n)$. We conclude that $\partial^0(I)=I=x_nI_c(G_n)+L$ is in fact vertex splittable.
		
		We have
		$$
		\partial(I)\ =\ \partial(x_1J+L)\ =\ x_1\partial(J)+(J+\partial(L)).
		$$
		
		Let $H$ be the graph on vertex set $[n-1]$ with edge set
		$$
		E(H)\ =\ E(G_n)\cup\{\{i,j\}:\ \{i,n\}\in E(G),\ j\in[n-1]\setminus\{i\}\}.
		$$
		
		We claim that
		\begin{enumerate}
			\item[(a)] $J+\partial(L)=I_c(H)$, and
			\item[(b)] $H$ is connected.
		\end{enumerate}
		
		Since $G_n$ is connected and $E(H)$ contains $E(G_n)$ it follows that $H$ is connected too and statement (b) holds. Statement (a) follows because
		\begin{align*}
			\mathcal{G}(J&+\partial(L))=\mathcal{G}(J)\cup\mathcal{G}(\partial(L))\\
			&=\{{\bf x}_{[n-1]\setminus\{i,j\}}:\{i,j\}\in E(G_n)\}\cup\{{\bf x}_{[n]\setminus\{i,n\}}/x_j:\{i,n\}\in E(G),j\ne i,j\ne n\}\\
			&=\{{\bf x}_{[n-1]\setminus\{i,j\}}:\{i,j\}\in E(G_n)\}\cup\{{\bf x}_{[n-1]\setminus\{i,j\}}:\{i,n\}\in E(G),j\in[n-1]\setminus\{i\}\}\\
			&=\{{\bf x}_{[n-1]\setminus\{p,q\}}:\{p,q\}\in E(H)\}=\mathcal{G}(I_c(H)).
		\end{align*}
		
		Now, let $\ell\in[n-2]$. A quick calculation shows that
		$$
		\partial^\ell(I)\ =\ x_1\partial^{\ell}(J)+(\partial^{\ell-1}(J)+\partial^\ell(L)).
		$$
		
		Since we have $I_c(H)=J+\partial(L)$, and so $\partial^{\ell-1}(I_c(H))=\partial^{\ell-1}(J)+\partial^\ell(J)$, the previous formula becomes
		\begin{equation}\label{eq:vs-ind}
			\partial^\ell(I)\ =\ x_1\partial^{\ell}(J)+\partial^{\ell-1}(I_c(H)).
		\end{equation}
		
		Now, we claim that $\partial^{\ell-1}(I_c(H))\subset\partial^\ell(J)$. Since $\partial^{\ell-1}(I_c(H))=\partial^{\ell-1}(J)+\partial^\ell(L)$ and obviously $\partial^{\ell-1}(J)\subset\partial^\ell(J)$, it remains to prove that $\partial^\ell(L)$ is contained in $\partial^{\ell}(J)$.
		
		Let $u={\bf x}_{[n-1]}/(x_px_{q_1}\cdots x_{q_\ell})\in\mathcal{G}(\partial^\ell(L))$ be a squarefree monomial generator with $\{p,n\}\in E(G)$. Since $G_n$ is connected, there exists an edge $\{p,r\}\in E(G_n)$. We distinguish the two possible cases.\smallskip
		
		\textbf{Case 1.} Let $r=q_i$ for some $i$. Say $i=1$. Then $v={\bf x}_{[n-1]}/(x_px_{q_1})\in I_c(H)$ and consequently $u=v/(x_{q_2}\cdots x_{q_{\ell}})\in\partial^{\ell-1}(J)\subset\partial^\ell(J)$, as desired.\smallskip
		
		\textbf{Case 2.} Let $r\ne q_i$ for all $i$. Then $u/x_r=({\bf x}_{[n-1]}/(x_px_r))/(x_{q_1}\cdots x_{q_\ell})\in\partial^\ell(J)$. This shows that $u\in\partial^\ell(J)$, as desired.\smallskip
		
		Hence $\partial^{\ell-1}(I_c(H))\subset\partial^\ell(J)$. By induction on $|V(G)|$, $J=I_c(G_n)$ and $I_c(H)$ satisfy the statement because the graphs $G_n$ and $H$ are connected on a smaller number of vertices than $G$. Therefore $\partial^\ell(J)$ and $\partial^{\ell-1}(I_c(H))$ are vertex splittable. Equation (\ref{eq:vs-ind}) implies that $\partial^\ell(I)$ is indeed vertex splittable.
	\end{proof}
	
	\section{Squarefree monomial ideals with many generators}
	
	Let $\mu(I)$ be the minimal number of generators of a homogeneous ideal $I\subset S$. If $I\subset S$ is a monomial ideal, then $\mu(I)=|\mathcal{G}(I)|$.
	
	Let $I\subset S$ be a squarefree monomial ideal generated in degree $d$. Then $\mu(I)\le\binom{n}{d}$. It was proved by Terai and Yoshida \cite[Theorem 3.3]{TY} (see also \cite[Corollary 4.6]{DNSVW}) that if $\mu(I)\ge\binom{n}{d}-2d+1$ then $I$ has linear resolution. In fact, inspecting the proof given in \cite[Corollary 4.6]{DNSVW}, if we write $I=I_\Delta$ and $\mu(I)\ge\binom{n}{d}-2d+1$ the authors prove that $\Delta^\vee$ is vertex decomposable. Hence, by \cite[Theorem 2.3]{MK} $I$ is vertex splittable.
	
	Let $I\subset S$ be a squarefree monomial ideal equigenerated in degree $d$ such that $\mu(I)\ge\binom{n}{d}-2d+1$. Is it true that $\partial^\ell(I)$ is vertex splittable, and thus has linear resolution, for all $0\le\ell\le d$ ? At present we are not able yet to answer this question in full generality. On the other hand, applying the classical Kruskal-Katona's theorem \cite{Katona,Kruskal}, we can show the following partial result.
	\begin{Theorem}\label{Thm:Kruskal}
		Let $I\subset S$ be a squarefree monomial ideal generated in degree $d$. Assume that $\mu(I)\ge\binom{n}{d}-2d+1$ and $n\ge 2d$. Then $\partial^\ell(I)$ is vertex splittable and $\reg\,\partial^\ell(I)=d-\ell$, for all $0\le\ell\le d$.
	\end{Theorem}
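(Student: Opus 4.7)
The plan is to induct on $\ell$ so as to maintain the Terai-Yoshida numerical condition
$$
\mu(\partial^\ell(I))\ \ge\ \binom{n}{d-\ell}-2(d-\ell)+1
$$
for every $0\le\ell\le d-2$. Once this condition is known at level $\ell$, the Terai-Yoshida/Moradi-Khosh-Ahang result recalled just before the theorem (i.e.\ \cite[Corollary 4.6]{DNSVW} together with \cite[Theorem 2.3]{MK}) immediately yields vertex splittability of $\partial^\ell(I)$. The two tail cases are free: $\partial^{d-1}(I)$ is generated in degree $1$, hence by a subset of the variables and in particular polymatroidal; $\partial^d(I)=S$. Because vertex splittable ideals have linear quotients \cite[Theorem 2.4]{MK} and $\partial^\ell(I)$ is equigenerated of degree $d-\ell$, the equality $\reg\partial^\ell(I)=d-\ell$ will follow automatically.

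The base case $\ell=0$ is the hypothesis. For the inductive step, assume the condition at level $\ell$ with $d':=d-\ell\ge 3$; identify $\mathcal{G}(\partial^\ell(I))$ with a family $\mathcal{F}\subseteq\binom{[n]}{d'}$ and set $t=\binom{n}{d'}-|\mathcal{F}|\le 2d'-1$. The generators of $\partial^{\ell+1}(I)$ correspond to the lower shadow $\partial\mathcal{F}=\{F\setminus\{i\}:F\in\mathcal{F},\,i\in F\}\subseteq\binom{[n]}{d'-1}$, so set $\mathcal{M}=\binom{[n]}{d'-1}\setminus\partial\mathcal{F}$. By the very definition of the shadow, $G\in\mathcal{M}$ iff every one of its $n-d'+1$ $d'$-supersets lies outside $\mathcal{F}$. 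Double counting the pairs $(G,F)$ with $G\in\mathcal{M}$, $F\notin\mathcal{F}$ and $G\subset F$ by each endpoint yields the Kruskal-Katona-type inequality
$$
(n-d'+1)\,|\mathcal{M}|\ \le\ d'\,t.
$$
Using $n\ge 2d\ge 2d'$ to guarantee $n-d'+1\ge d'+1$, combined with $t\le 2d'-1$ and the long-division identity $d'(2d'-1)=(d'+1)(2d'-3)+3$, one obtains $\frac{d'(2d'-1)}{d'+1}=(2d'-3)+\frac{3}{d'+1}$, whose fractional part is strictly less than $1$ for $d'\ge 3$. Therefore
$$
|\mathcal{M}|\ \le\ \Big\lfloor\frac{d'(2d'-1)}{d'+1}\Big\rfloor\ =\ 2d'-3\ =\ 2(d'-1)-1,
$$
so $\mu(\partial^{\ell+1}(I))\ge\binom{n}{d'-1}-2(d'-1)+1$ and the induction closes.

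The main obstacle is that the double-counting bound $(n-d'+1)|\mathcal{M}|\le d't$ is used right at the edge of what is needed: the floor of $d'(2d'-1)/(d'+1)$ equals $2d'-3$ only because the remainder $3/(d'+1)$ happens to sit strictly below $1$ for $d'\ge 3$. This explains both the sharpness of the hypothesis $n\ge 2d$ (which is precisely what secures $n-d'+1\ge d'+1$ at every level of the induction) and the need to carve out the low-degree tail $d'\in\{0,1,2\}$ and dispatch it by the ad-hoc observations recorded in the first paragraph rather than through the inductive machinery.
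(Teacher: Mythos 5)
Your proposal is correct, and its key numerical step genuinely differs from the paper's. Both arguments share the same skeleton: propagate the Terai--Yoshida/DNSVW condition $\mu(\partial^\ell(I))\ge\binom{n}{d-\ell}-2(d-\ell)+1$ down the tower of gradients, invoke \cite[Corollary 4.6]{DNSVW} and \cite[Theorem 2.3]{MK} at each level to get vertex splittability (hence linear quotients and $\reg=d-\ell$), and note that the last two levels are a monomial prime and $S$ itself. The difference lies in how the shadow is bounded. The paper applies the full Kruskal--Katona theorem to $a=\binom{n}{d}-2d+1$, which forces an explicit computation of the $d$-th binomial expansion of $a$ via iterated Pascal identities and a case split on $n$ versus $3d-2$; the payoff is a sharper conclusion, namely $|\partial(\mathcal{M})|\ge\binom{n}{d-1}-1$. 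You instead bound the complement $\mathcal{M}$ of the shadow by double counting incidences $(G,F)$ with $G\in\mathcal{M}$, $F\notin\mathcal{F}$, $G\subset F$ (every $d'$-superset of a $G\in\mathcal{M}$ misses $\mathcal{F}$, and every missing $F$ has only $d'$ facets), getting $(n-d'+1)|\mathcal{M}|\le d't$ and hence, using $n-d'+1\ge d'+1$ and $t\le 2d'-1$, the bound $|\mathcal{M}|\le\lfloor d'(2d'-1)/(d'+1)\rfloor=2d'-3$ for $d'\ge3$ --- exactly the threshold needed, with nothing to spare. This is shorter, avoids Kruskal--Katona and the Macaulay-expansion bookkeeping entirely, and makes transparent why $n\ge2d$ is the operative hypothesis (it is what makes the ratio $d't/(n-d'+1)$ dip below $2d'-2$); the paper's computation buys the stronger count $|\mathcal{M}|\le1$ and an exact value of $a^{(d-1)}$, which your weaker bound does not recover but which the theorem does not need.
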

	\begin{proof}
		We may assume that $d\ge3$ otherwise the statement holds vacuously. 
		
		Let $\mathcal{M}=\mathcal{G}(I)$. Recall that the \textit{shadow} of $\mathcal{M}$ is defined as the set
		$$
		\partial(\mathcal{M})\ =\ \{u/x_i:\ u\in\mathcal{M},\,i\in\supp(u)\}.
		$$ 
		
		Then $\partial(I)$ is the ideal generated by $\partial(\mathcal{M})$. Let $|\mathcal{M}|=a=\binom{a_d}{d}+\binom{a_{d-1}}{d-1}+\dots+\binom{a_t}{t}$ be the $d$-th binomial expansion of $|\mathcal{M}|$. This expression is uniquely determined by the inequalities $a_d>a_{d-1}>\dots>a_t\ge t$, see \cite[Lemma 6.3.4]{HHBook}.
		
		By the Kruskal-Katona's theorem (\cite{Katona,Kruskal}) we have
		$$
		|\partial(\mathcal{M})|\ \ge\ a^{(d-1)}\ =\ \binom{a_d}{d-1}+\binom{a_{d-1}}{d-2}+\dots+\binom{a_t}{t-1}.
		$$
		
		If we show that $a^{(d-1)}\ge\binom{n}{d-1}-2(d-1)+1$, then applying again \cite[Theorem 3.3]{TY} (or \cite[Corollary 4.6]{DNSVW}), it follows that $\partial(I)$ is vertex splittable. Since the assumptions are again satisfied for $\partial(I)$, it is enough to prove the statement for $I=\partial^0(I)$. Hence, it remains to prove that $a^{(d-1)}\ge\binom{n}{d-1}-2(d-1)+1$.
		
		The operator $b\mapsto b^{(d-1)}$ is an increasing function. Therefore, we may also assume that $|\mathcal{M}|=a=\binom{n}{d}-2d+1$ and it is enough to show that
		$$
		a^{(d-1)}\ \ge\ \binom{n}{d-1}-2(d-1)+1.
		$$
		
		To this end, we claim that the $d$-th binomial expansion of $a$ is:
		\begin{equation}\label{eq:Macaulay}
			a\ =\ \sum_{i=1}^{d-2}\binom{n-i}{d-i+1}+\begin{cases}
				\binom{n-d}{2}+\binom{2n-4d+2}{1}&\textup{if}\ 2d\le n\le 3d-3,\\[3pt]
				\binom{n-d+1}{2}&\textup{if}\ n=3d-2,\\[3pt]
				\binom{n-d+1}{2}+\binom{n-3d+2}{1}&\textup{if}\ n\ge 3d-1.
			\end{cases}
		\end{equation}
		
		Using Pascal's rule, we have $\binom{n}{d}=\binom{n-1}{d}+\binom{n-1}{d-1}$. Hence,
		$$
		a\ =\ \binom{n}{d}-2d+1\ =\ \binom{n-1}{d}+\binom{n-1}{d-1}-2d+1.
		$$
		
		Applying again Pascal's rule to $\binom{n-1}{d-1}$ and iterating this procedure yields that
		\begin{equation}\label{eq:a}
			a\ =\ \binom{n-1}{d}+\binom{n-2}{d-1}+\dots+\binom{n-d+2}{3}+\binom{n-d+2}{2}-2d+1.
		\end{equation}
		
		Since $n\ge 2d$, we have $\binom{n-d+2}{2}\ge\binom{d+2}{2}=(d+2)(d+1)/2>2d-1$. Hence $b=\binom{n-d+2}{2}-2d+1>0$. Now we distinguish two cases.\medskip
		
		\textbf{Case 1.} Let $2d\le n\le 3d-3$. Notice that
		\begin{equation}\label{eq:b}
			\begin{aligned}
				b-\binom{n-d}{2}\ &=\ \binom{n-d+2}{2}-\binom{n-d}{2}-2d+1\\
				&=\ \frac{(n-d+2)(n-d+1)-(n-d)(n-d-1)-4d+2}{2}\\ 
				&=\ \frac{4n-8d+4}{2}\ = \binom{2n-4d+2}{1}\ \ge\ 1,
			\end{aligned}
		\end{equation}
		in the given range of values of $n$. Combining (\ref{eq:b}) with (\ref{eq:a}) we obtain the expression given in the first case of equation (\ref{eq:Macaulay}). Such expression is indeed the $d$-th binomial expansion of $a$, because
		$$
		n-1>n-2>\dots>n-d+2>n-d>2n-4d+2\ge1,
		$$
		where the second to last inequality follows because $n\le 3d-3$.\smallskip
		
		\textbf{Case 2.} Let $n\ge 3d-2$. Using Pascal's rule we have
		\begin{equation}\label{eq:b1}
			\begin{aligned}
				b-\binom{n-d+1}{2}\ &=\ \binom{n-d+2}{2}-\binom{n-d+1}{2}-2d+1\\
				&=\ \binom{n-d+1}{1}-2d+1\ =\ n-3d+2\ \ge\ 0.
			\end{aligned}
		\end{equation}
		Combining equation (\ref{eq:b1}) with (\ref{eq:a}), and distinguish between the cases $n=3d-2$ and $n\ge3d-1$, we obtain the expressions given in the second and third cases of equation (\ref{eq:Macaulay}). These expressions are indeed the $d$-th binomial expansion of $a$, because
		$$
		n-1>n-2>\dots>n-d+2>n-d+1\ge 2,
		$$
		and in addition we have $n-d+1>n-3d+2\ge1$ if $n\ge 3d-1$.\medskip
		
		Having acquired (\ref{eq:Macaulay}), using that $\binom{N}{0}=1$ and $\binom{N}{1}=N$ for all $N\ge0$, we obtain that
		\begin{equation}\label{eq:ad-1}
		a^{(d-1)}\ =\ \sum_{i=1}^{d-2}\binom{n-i}{d-i}+\begin{cases}
			\,n-d+1&\textup{if}\ 2d\le n\le 3d-2,\\
			\,n-d+2&\textup{if}\ n\ge 3d-1.
		\end{cases}
		\end{equation}
		
		Iterations of Pascal's rule yield that $\binom{n}{d-1}=\sum_{j=1}^{d}\binom{n-j}{d-j}$. Therefore,
		\begin{align*}
			\sum_{i=1}^{d-2}\binom{n-i}{d-i}\ &=\ \binom{n}{d-1}-\binom{n-(d-1)}{1}-\binom{n-d}{0}\\ &=\ \binom{n}{d-1}-n+d-2.
		\end{align*}
		
		Combining this fact with (\ref{eq:ad-1}) we obtain that
		\begin{align*}
			a^{(d-1)}\ &=\ \begin{cases}
				\binom{n}{d-1}-1&\textup{if}\ 2d\le n\le 3d-2,\\[3pt]
				\binom{n}{d-1}&\textup{if}\ n\ge 3d-1.
			\end{cases}
		\end{align*}
		Hence, $a^{(d-1)}\ge\binom{n}{d-1}-2(d-1)+1$, as desired.
	\end{proof}
	
	For the proof of the previous theorem to hold, the hypothesis $n\ge2d$ cannot be dropped. In fact, let $n=20$ and $d=17$, then $n<2d$. We have
	\begin{align*}
		\textstyle a&=\textstyle \binom{n}{d}-2d+1=\binom{20}{17}-2\cdot17+1=1107,\ \ \text{and}\\
		\textstyle b&=\textstyle \binom{n}{d-1}-2(d-1)+1=\binom{20}{16}-2\cdot16+1=4814.
	\end{align*}
	However $a^{(d-1)}=1107^{(16)}$ is ``only" $4813$ which is strictly less than $b=4814$.\medskip
	
	Therefore, if the conclusion of Theorem \ref{Thm:Kruskal} holds regardless of the hypothesis ``$n\ge2d$" a different argument would be required.
	
	\section{Powers of squarefree quadratic monomial ideals}
	
	It is proved in \cite{HHZ} (see also \cite[Theorem 1.1]{FShort} or \cite[Theorem E]{FM}) that if $I\subset S$ is a squarefree monomial ideal with $2$-linear resolution, then $I^k$ has $2k$-linear resolution as well, for all $k\ge1$.
	
	In this final section, we prove the stronger fact that if $I$ has $2$-linear resolution then $I^k$ has differential linear resolution for all $k\ge1$.
	
	\begin{Theorem}\label{Thm:edgeideal}
		Let $I\subset S$ be a squarefree monomial ideal with $2$-linear resolution. Then $\partial^\ell(I^k)$ has linear quotients, for all $k\ge1$ and all $0\le\ell\le 2k$. In particular,
		$$
		\reg\,\partial^\ell(I^k)\ =\ 2k-\ell,
		$$
		for all $k\ge1$ and all $0\le\ell\le 2k$.
	\end{Theorem}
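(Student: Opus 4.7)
My plan is to first compute $\partial^\ell(I^k)$ explicitly, and then verify linear quotients in each of the resulting cases.

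Since $I$ is squarefree and generated in degree $2$, a direct computation shows $\partial(I)=\m_I:=(x_i:i\in\supp(I))$, a monomial prime. Applying Leibniz's rule repeatedly (valid for monomial ideals, as already used in the corollary at the end of Section~\ref{sec1}), together with the trivial absorption $I\subseteq\m_I^{2}$ (so $I^{a+1}\m_I^{b-2}\subseteq I^{a}\m_I^{b}$ for $b\ge 2$) and the identity $\partial(\m_I^{s})=\m_I^{s-1}$, induction on $\ell$ yields
$$
\partial^\ell(I^k)\ =\ \begin{cases}I^{k-\ell}\m_I^\ell & 0\le \ell\le k,\\[2pt]\m_I^{\,2k-\ell} & k\le \ell\le 2k,\end{cases}
$$
with the two expressions agreeing at $\ell=k$. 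Consequently, it suffices to prove that $I^{a}\m_I^{b}$ has linear quotients for every $a\ge 0,\,b\ge 0$ with $a+b\ge 1$; the regularity assertion $\reg\,\partial^\ell(I^k)=2k-\ell$ then follows immediately from $\reg\,J=\omega(J)$ for any monomial ideal $J$ with linear quotients.

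For $a=0$, the ideal $\m_I^{b}$ is polymatroidal and there is nothing to check. For $b=0$, the ideal $I^{a}$ has $2a$-linear resolution by \cite{HHZ}; in fact it has linear quotients, since any squarefree monomial ideal with $2$-linear resolution is vertex splittable (Theorem~\ref{Thm:sqfree-d}) and this property lifts to powers via an explicit ordering built from a vertex splitting of $I$. The remaining case $a\ge 1,\,b\ge 1$ is the heart of the argument. Here I would proceed by induction on $a+b$. Choose a splitting vertex $x_n$ of $I$, so that $I=x_n I_1+I_2$ with $I_1,I_2$ vertex splittable and $I_2\subseteq I_1$; expanding $(x_nI_1+I_2)^{a}\,\m_I^{b}$ and grouping terms according to whether $x_n$ divides them produces a candidate vertex splitting $I^{a}\m_I^{b}=x_n\cdot A+B$, where $A$ and $B$ are ideals of the same shape but with smaller $a+b$, to which induction applies. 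The inclusion $B\subseteq A$ then reduces to the analogous inclusion for $I_1,I_2$ together with the fact that multiplication by $\m_I$ is compatible with this decomposition.

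The main obstacle is verifying that this recursive decomposition genuinely yields a vertex splitting, namely that $\mathcal{G}(x_nA)$ and $\mathcal{G}(B)$ form a disjoint decomposition of $\mathcal{G}(I^{a}\m_I^{b})$ with $\mathcal{G}(B)\subseteq A$. The subtlety is that $x_n\in\m_I$, so generators of $x_nI_1\cdot\m_I^{b}$ and of $I_2\cdot\m_I^{b}$ can coincide in non-obvious ways, and one must check that the splitting-vertex condition survives after taking $a$-th powers and multiplying by $\m_I^{b}$. If this direct strategy runs into trouble, a fallback is to construct an explicit linear quotients order on $\mathcal{G}(I^{a}\m_I^{b})$ by refining a linear quotients order of $\mathcal{G}(I^{a})$ (of the type produced in the Herzog--Hibi--Zheng argument) and then appending a lexicographic order on the degree-$b$ factor coming from $\m_I^{b}$, verifying the colon condition directly by exploiting that $I^{a}$ already has linear quotients and that $\m_I^{b}$ is polymatroidal.
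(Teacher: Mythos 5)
Your reduction is correct and in fact cleaner than the paper's argument. The identity $\partial(I)=\m_I$, the Leibniz rule, $\partial(\m_I^{s})=\m_I^{s-1}$, and the absorption $I\subseteq\m_I^{2}$ do give
$\partial^\ell(I^k)=I^{k-\ell}\m_I^{\ell}$ for $0\le\ell\le k$ and $\m_I^{2k-\ell}$ for $k\le\ell\le 2k$; this is a genuinely different route from the paper, which instead writes $I=x_nP+J$ as in the Herzog--Hibi--Zheng argument and tracks a two-term decomposition of $\partial^\ell(I^k)$ through a long induction. Your formula agrees with theirs (one can check this on examples or by expanding $(x_nP+J)^{k-\ell}(\n+(x_n))^{\ell}$), and it reduces everything to the single claim that $I^{a}\m_I^{b}$ has linear quotients.

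That claim, however, is exactly where your proposal stops being a proof. The vertex-splitting induction you outline does not go through as stated: after multiplying by $\m_I^{b}$ with $b\ge1$, the ``$B$'' part of your decomposition contains generators divisible by $x_n$ (coming from the factor $x_n\in\m_I$), so $\mathcal{G}(x_nA)$ and $\mathcal{G}(B)$ are not disjoint and the splitting-vertex condition fails; you flag this obstacle yourself but do not resolve it, and the fallback via an explicit order is only a sketch. The gap is real but closes in one line using a lemma the paper itself invokes: by Soleyman Jahan--Zheng \cite[Lemma 2.5]{JZ}, if $L$ has linear quotients and $Q$ is a monomial prime with $\supp(L)\subseteq\supp(Q)$, then $QL$ has linear quotients. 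Since $I^{a}$ has linear quotients by \cite{HHZ} and $\supp(I^{a}\m_I^{b-1})=\supp(\m_I)$, induction on $b$ gives linear quotients for $I^{a}\m_I^{b}$, and the regularity statement follows from $\reg J=\omega(J)$ as you say. With that substitution your argument is complete and shorter than the paper's.
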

	\begin{proof}
		For the proof of this result, we use some results from \cite{HHZ} and \cite{FShort}.
		
		Let $I\subset S$ be a quadratic squarefree monomial ideal with linear resolution. Without loss of generality, we can assume that $I$ is \textit{fully-supported}, that is $\supp(I)=[n]$. Up to a suitable relabeling on the variables, by \cite[Lemma 2.4 and Corollary 3.1]{FShort} we can write $I=x_nP+J$, for monomial ideals $P,J\subset S$ satisfying the properties:
		\begin{enumerate}
			\item[(i)] $n\notin\supp(P)\cup\supp(J)$,
			\item[(ii)] $P$ is a monomial prime ideal and $J\subset P$,
			\item[(iii)] $P^rJ^s$ has linear quotients, and so linear resolution, for all $r,s\ge0$.
		\end{enumerate}
		
		Now, let $k\ge1$. Then $I^k=\sum_{r=0}^kx_n^{k-r}P^{k-r}J^r$. If $\ell=0$, then $\partial^0(I^k)=I^k$ has linear quotients by \cite{HHZ} (see also \cite[Theorem 1.1]{FShort}).\smallskip
		
		Next, let $\ell\ge1$. We claim that
		\begin{equation}\label{eq:partialI(G)}
			\partial^\ell(I^k)\ =\ \begin{cases}
				\displaystyle\,(\sum_{r=0}^\ell x_n^{k-r}\n^rP^{k-\ell})+(\sum_{s=\ell+1}^kx_n^{k-s}\n^\ell P^{k-s}J^{s-\ell})&\textup{for}\ 1\le\ell\le k-1,\\[2pt]
				\,\,\,\m^{2k-\ell}&\textup{for}\ k\le\ell\le 2k,
			\end{cases}
		\end{equation}
		where $\n=(x_1,\dots,x_{n-1})$ and $\m=(\n,x_n)=(x_1,\dots,x_{n-1},x_{n})$.
		
		For $\ell=1$, we have
		\begin{equation}\label{eq:HHZ1}
			\begin{aligned}
				\partial(I^k)\ &=\ \partial(\sum_{s=0}^kx_n^{k-s}P^{k-s}J^s)\ =\ \sum_{s=0}^k\partial(x_n^{k-s}P^{k-s}J^s)\\
				&=\ \sum_{s=0}^k\partial(x_n^{k-s})P^{k-s}J^s+\sum_{s=0}^kx_n^{k-s}\partial(P^{k-s}J^{s}).
			\end{aligned}
		\end{equation}
		
		Next, we compute both sums appearing in the above expression. We have
		\begin{equation}\label{eq:HHZ2}
			\sum_{s=0}^k\partial(x_n^{k-s})P^{k-s}J^s= \sum_{s=0}^{k-1}x_{n}^{k-s-1}P^{k-s}J^s= \sum_{s=1}^kx_n^{k-s}P^{k+1-s}J^{s-1}.
		\end{equation}
		
		For the second sum, we use that $\partial(P^{r})=P^{r-1}$ for $r\ge1$. Hence
		\begin{equation}\label{eq:HHZ3}
			\begin{aligned}
				\sum_{s=0}^kx_n^{k-s}\partial(P^{k-s}J^s)\ &=\ \sum_{s=0}^kx_n^{k-s}[\partial(P^{k-s})J^s+P^{k-s}\partial(J^s)]\\
				&=\ \sum_{s=0}^kx_n^{k-s}[P^{k-s-1}J^r+P^{k-s}J^{s-1}\partial(J)].
			\end{aligned}
		\end{equation}
		
		Combining equations (\ref{eq:HHZ1}), (\ref{eq:HHZ2}) and (\ref{eq:HHZ3}) we obtain
		\begin{equation}
			\begin{aligned}\label{eq:HHZ4}
				\partial(I^k)\ &=\ x_n^kP^{k-1}+\sum_{s=1}^kx_n^{k-s}[P^{k+1-s}J^{s-1}+P^{k-s-1}J^s+P^{k-s}J^{s-1}\partial(J)]\\
				&=\ x_n^kP^{k-1}+\sum_{s=1}^kx_n^{k-s}P^{k-s-1}J^{s-1}[P^{2}+J+P\partial(J)].
			\end{aligned}
		\end{equation}
		
		We claim that
		\begin{equation}\label{eq:HHZ5}
			P^2+J+P\partial(J)\ =\ P(P+\partial(J)).
		\end{equation}
		
		Clearly $P(P+\partial(J))\subset P^2+J+P\partial(J)$. Conversely, let $u\in\mathcal{G}(P^2+J+P\partial(J))$. If $u\in P^2+P\partial(J)$ then $u\in\mathcal{G}(P(P+\partial(J)))$. Else, if $u\in\mathcal{G}(J)$, using (ii) we have $u=x_px_q$ with $x_p\in P$. But $x_q\in\partial(J)$, hence $u\in\mathcal{G}(P\partial(J))\subset\mathcal{G}(P(P+\partial(J)))$, as desired.
		
		Since $I$ is fully-supported, and $n\notin\supp(P)\cup\supp(J)$ by property (i), we obtain that $P+\partial(J)=\n=(x_1,\dots,x_{n-1})$. Combining equations (\ref{eq:HHZ4}) and (\ref{eq:HHZ5}) we obtain
		\begin{equation}\label{eq:HHZ6}
			\partial(I^k)\ =\ x_{n}^{k}P^{k-1}+x_{n}^{k-1}\n P^{k-1}+\sum_{s=2}^kx_n^{k-s}\n P^{k-s}J^{s-1}.
		\end{equation}
	    This is the expression given in equation (\ref{eq:partialI(G)}) for $\ell=1$.
	    
	    Now, let $1<\ell\le k-1$, and assume by finite induction that the formula for $\partial^\ell(I^k)$ given in equation (\ref{eq:partialI(G)}) is correct. Then,
	    \begin{equation}\label{eq:partial-ell}
	    	\partial^{\ell+1}(I^k)\ =\ \partial(\partial^\ell(I^k))\ =\ \partial((\sum_{r=0}^\ell x_n^{k-r}\n^rP^{k-\ell})+(\sum_{s=\ell+1}^kx_n^{k-s}\n^\ell P^{k-s}J^{s-\ell})).
	    \end{equation}
	    
	    For the first sum in (\ref{eq:partial-ell}) we have
	    \begin{align*}
	    	\partial(\sum_{r=0}^\ell&x_n^{k-r}\n^rP^{k-\ell})\ =\\
	    	&=\ x_n^kP^{k-(\ell+1)}\!+\!x_n^{k-1}(P^{k-\ell}+\partial(\n P^{k-\ell}))\!+\ldots+\!x_n^{k-\ell}(\n^{\ell-1} P^{k-\ell}\!+\!\partial(\n^\ell P^{k-\ell}))\\
	    	&+\ x_n^{k-(\ell+1)}\n^\ell P^{k-\ell}.
	    \end{align*}
    
        Using that $P\subset\n$, we have
        $$
        \n^{r-1}P^{k-\ell}+\partial(\n^{r}P^{k-\ell})\ = \ \n^{r-1}P^{k-\ell}+\n^{r-1}P^{k-\ell}+\n^{r}P^{k-(\ell+1)}\ =\ \n^{r}P^{k-(\ell+1)}.
        $$
        
        Hence
        \begin{equation}\label{eq:partialnn}
        	\partial(\sum_{r=0}^\ell x_n^{k-r}\n^rP^{k-\ell})\ =\ \sum_{r=0}^\ell x_n^{k-r}\n^r P^{k-(\ell+1)}+x_n^{k-(\ell+1)}\n^\ell P^{k-\ell}.
        \end{equation}
    
        For the second sum in (\ref{eq:partial-ell}) we have
        \begin{align*}
        	\partial(\sum_{s=\ell+1}^k&x_n^{k-s}\n^\ell P^{k-s}J^{s-\ell})\ =\ x_n^{k-(\ell+1)}\partial(\n^\ell P^{k-(\ell+1)}J)\\
        	&+x_n^{k-(\ell+2)}(\n^\ell P^{k-(\ell+1)}J\!+\!\partial(\n^\ell P^{k-(\ell+2)}J^2))\!+\ldots+\!(\n^\ell PJ^{k-(\ell+1)}+\partial(\n^\ell J^{k-\ell})).
        \end{align*}
    
        We have
        \begin{align*}
            \partial(\n^\ell P^{k-(\ell+1)}J)\ &=\ \n^{\ell-1}P^{k-(\ell+1)}J+\n^\ell P^{k-(\ell+2)}J+\n^\ell P^{k-(\ell+1)}\partial(J)\\
            &=\ \n^\ell P^{k-(\ell+2)}J+\n^\ell P^{k-(\ell+1)}\partial(J),
        \end{align*}
        and using that $J\subset\n P$, $\partial(J)\subset\n$ and equation (\ref{eq:HHZ5}), we obtain
        \begin{align*}
        	\n^\ell&P^{k-(s-1)}J^{s-1-\ell}\!+\!\partial(\n^\ell P^{k-s}J^{s-\ell})\ =\\
        	&=\ \n^\ell P^{k-(s-1)}J^{s-(\ell+1)}+\n^{\ell-1}P^{k-s}J^{s-\ell}+\n^\ell P^{k-(s+1)}J^{s-\ell}+\n^{\ell}P^{k-s}J^{s-(\ell+1)}\partial(J)\\
        	&=\ \n^{\ell-1}P^{k-(s+1)}J^{s-(\ell+1)}[\n P^2+PJ+\n J+\n P\partial(J)]\\
        	&=\ \n^{\ell}P^{k-(s+1)}J^{s-(\ell+1)}[P^2+J+P\partial(J)]\\
        	&=\ \n^{\ell+1}P^{k-s}J^{s-(\ell+1)}.
        \end{align*}
        
        Consequently
        \begin{equation}\label{eq:HHZ7}
        	\begin{aligned}
        		\partial(\sum_{s=\ell+1}^{k}&x_n^{k-s}\n^\ell P^{k-s}J^{s-\ell})\ =\ x_{n}^{k-(\ell+1)}(\n^{\ell}P^{k-(\ell+2)}J+\n^\ell P^{k-(\ell+1)}\partial(J))\\
        		&+(\sum_{s=\ell+2}^kx_{n}^{k-s}\n^{\ell+1}P^{k-s}J^{s-(\ell+1)}).
        	\end{aligned}
        \end{equation}
    
        Using again equation (\ref{eq:HHZ5}), we have
        \begin{equation}\label{eq:HHZ8}
        	\begin{aligned}
        		\n^\ell P^{k-\ell}+\n^{\ell}P^{k-(\ell+2)}J+\n^\ell P^{k-(\ell+1)}\partial(J)\ &=\ \n^{\ell}P^{k-(\ell+2)}[P^2+J+P\partial(J)]\\
        	&=\ \n^{\ell+1}P^{k-(\ell+1)}.
        	\end{aligned}
        \end{equation}
    
        Combining equations (\ref{eq:partial-ell}), (\ref{eq:partialnn}), (\ref{eq:HHZ7}) and (\ref{eq:HHZ8}) we finally conclude that the formula given in equation (\ref{eq:partialI(G)}) for $\partial^{\ell+1}(I^k)$ is correct. In particular, for $\ell+1=k$ we have
        $$
        \partial^k(I^k)\ =\ \sum_{r=0}^kx_n^{k-r}\n^r\ =\ \m^k.
        $$
        
        Therefore, for each $k\le\ell\le 2k$, we clearly have $\partial^{\ell}(I^k)=\m^{2k-\ell}$.
        
        Having established the formula (\ref{eq:partialI(G)}) for all values $1\le\ell\le 2k$, it remains to prove that $\partial^\ell(I^k)$ has linear quotients. This is clear for $k\le\ell\le 2k$ because $\partial^\ell(I^k)=\m^{2k-\ell}$.
        
        Now, let $1\le\ell\le k-1$. Then we can write $\partial^\ell(I^k)=\sum_{t=0}^k x_n^{k-t}I_t$, where $I_t=\n^tP^{k-\ell}$ for $0\le t\le\ell$, and $I_{t}=\n^{\ell}P^{k-t}J^{t-\ell}$ for $\ell+1\le t\le k$. Using that $J\subset\n P$ and $P\subset\n$, we obtain that
        $$
        I_0\supset I_1\supset I_2\supset\cdots\supset I_k.
        $$
        
        We claim that each $I_t$ has linear quotients. To this end, we use a result of Soleyman Jahan and Zheng \cite[Lemma 2.5]{JZ} which says that if $J\subset S$ is a monomial ideal with linear quotients and $Q\subset S$ is a monomial prime ideal such that $\supp(J)\subset\supp(Q)$, then $QJ$ has linear quotients too. Applying this result in our case with $Q=\n$, and using the property (iii), we see that all ideals $I_t$ have indeed linear quotients.
        
        To finish the proof we use a result of Ficarra, Moradi and R\"omer \cite[Lemma 3.2]{FMR} which says that if $I_1,I_2\subset S$ are monomial ideals with linear quotients such that $I_2\subset I_1$, $i\notin\supp(I_2)$ and $\mathcal{G}(x_i I_1)\subset\mathcal{G}(x_iI_1+I_2)$, then the ideal $x_iI_1+I_2$ has again linear quotients.
        
        Applying this result in our situation, since $I_0\subset I_1$, both ideals have linear quotients, $n\notin\supp(I_1)$ and $\mathcal{G}(x_nI_0+I_1)=\mathcal{G}(x_nI_0)\sqcup\mathcal{G}(I_1)$, we conclude that $x_nI_0+I_1$ has linear quotients. For the same reasons, $x_n(x_nI_0+I_1)+I_2$ has linear quotients. Iterating this argument to the other ideals $I_t$, we finally obtain that $\partial^\ell(I^k)$ has indeed linear quotients, concluding the proof.
	\end{proof}

    We expect that Theorem \ref{Thm:edgeideal} actually holds for any monomial ideal (not necessarily squarefree) with $2$-linear resolution.\smallskip

	By \cite[Theorem B]{FM}, if $I\subset S$ is a complementary edge ideal with linear resolution, then $I^k$ has linear resolution, in fact linear quotients, for all $k\ge1$. We expect a stronger result, namely that the powers of complementary edge ideals with linear resolution have differential linear resolution.\medskip
	
	\textbf{Acknowledgment.} A. Ficarra was supported by the Grant JDC2023-051705-I funded by MICIU/AEI/10.13039/501100011033 and by the FSE+ and also by INDAM (Istituto Nazionale Di Alta Matematica). The author is grateful to E. Sgroi for some useful discussion around the topic of this paper.\bigskip

\end{document}